\newtheorem{thm}{Theorem}[section]
\newtheorem{lemma}[thm]{Lemma}
\newtheorem{prop}[thm]{Proposition}
\newtheorem{cor}[thm]{Corollary}
\newtheorem{question}[thm]{Question}
\theoremstyle{definition}
\newtheorem{definition}[thm]{Definition}
\theoremstyle{remark}
\newtheorem{example}[thm]{Example}
\newtheorem{remark}[thm]{Remark}
\newcommand{\N}{\mathbb N}
\title{Robust Graph Ideals}
\author[A.~Boocher, B.~Brown, T.~Duff, L.~Lyman, T.~Murayama, A.~Nesky, K.~Schaefer]{Adam Boocher, Bryan Christopher Brown, Timothy Duff, Laura Lyman, \\ Takumi Murayama, Amy Nesky and Karl Schaefer}
\begin{document}
\maketitle 
\begin{abstract}
Let $I$ be a toric ideal. We say $I$ is \emph{robust} if its universal Gr\"obner basis is a minimal generating set. We show that any robust toric ideal arising from a graph $G$ is also minimally generated by its Graver basis. We then completely characterize all graphs which give rise to robust ideals. Our characterization shows that robustness can be determined solely in terms of graph-theoretic conditions on the set of circuits of $G$.
\end{abstract}

\section{Introduction}
Let $A=({\bf a}_1 \vert {\bf a}_2 \vert \cdots \vert {\bf a}_m)$ be an $n \times m$ matrix with entries in $\N$. Consider the homomorphism $\phi\colon k[x_1,\ldots,x_m] \rightarrow k[s_1,\ldots, s_n]$ such that $x_i \mapsto {\bf s}^{{\bf a}_i}$, where by convention ${\bf s}^{\bf v} := s_1^{v_1}\cdots s_n^{v_n}$ for ${\bf v }= (v_1,\ldots,v_n) \in \N^n$. The \emph{toric ideal} $I_A$ is defined to be $\ker(\phi)$. Toric ideals arise naturally in several areas of study, including integer programming, algebraic statistics, geometric modeling, and graph theory (see \cite{BHP12, GP13, RTT12}).

It is well-known that toric ideals are prime ideals that are generated by binomials \cite[\S4]{Stu96}. Among these, distinguished sets of binomials in $I_A$ have been introduced and studied for many classes of toric ideals.  The \emph{Graver basis} of $I_A$, denoted $\mathcal{G}_A$, consists of all binomials which are \emph{primitive}; that is, all binomials ${\bf x}^{\bf c}-{\bf x}^{\bf d} \in I_A$ such that there does not exists a binomial  ${\bf x}^{\bf c'} - {\bf x} ^{\bf d'} \in I_A$ with ${\bf x}^{\bf c'} \mid {\bf x} ^{\bf c}$ and ${\bf x}^{\bf d'} \mid {\bf x} ^{\bf d}$. The \emph{universal Gr\"obner basis} of $I_A$, denoted $\mathcal{U}_A$, is the union of all reduced Gr\"obner bases for $I_A$. Furthermore, $\mathcal{U}_A$ is a Gr\"obner basis for $I_A$ under all monomial term orders.  Finally, we say that ${\bf x}^{\bf c}-{\bf x}^{\bf d}$ is a \emph{circuit} if it is irreducible and if the set of indices for which $c_i, d_i$ are nonzero is minimal with respect to inclusion. Let $\mathcal{C}_A$ be the set of all circuits in $I_A$. By a result of \cite{Stu96}, the inclusions $\mathcal{C}_A \subset \mathcal{U}_A \subset \mathcal{G}_A$ hold.

In this paper, we study toric ideals for which $\mathcal{U}_A$ is a minimal generating set for $I_A$.  We call these ideals \emph{robust}. In \cite{BR}, the authors classified all robust toric ideals generated by quadratics; however, there are significant obstacles in characterizing robustness for toric ideals generated in higher degrees. The purpose of this project is to characterize robustness for toric ideals arising from graphs, that is, when $A$ is the incidence matrix of a graph.   Our first main result shows that for graphs, robustness is strong enough to ensure that the universal Gr\"obner basis and Graver basis are equal:

\begin{thm}\label{intro:urobustiffgrobust}
Let $G$ be a simple graph.  Then $I_G$ is robust iff it is minimally generated by its Graver basis.
\end{thm}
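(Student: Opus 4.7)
The direction ``$I_G$ minimally generated by its Graver basis implies $I_G$ is robust'' follows directly from the inclusion $\mathcal{U}_A \subseteq \mathcal{G}_A$: since $\mathcal{U}_A$ is already a generating set of $I_G$, minimality of $\mathcal{G}_A$ forbids any strict inclusion $\mathcal{U}_A \subsetneq \mathcal{G}_A$, so $\mathcal{U}_A = \mathcal{G}_A$ and hence $\mathcal{U}_A$ is itself a minimal generating set. This observation reduces the theorem to a single equivalence: $I_G$ is robust if and only if $\mathcal{U}_A = \mathcal{G}_A$. Only the direction ``robust implies $\mathcal{U}_A = \mathcal{G}_A$'' carries real content.

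To prove this, assume $I_G$ is robust, and let $g = \mathbf{x}^{\mathbf{c}} - \mathbf{x}^{\mathbf{d}} \in \mathcal{G}_A$ be any primitive binomial; the goal is to show $g \in \mathcal{U}_A$. Since $\mathcal{U}_A$ generates $I_G$, one can expand $g = \sum_i p_i u_i$ with each $p_i$ a monomial and each $u_i \in \mathcal{U}_A$. The plan is to translate this identity through the standard correspondence between binomials of $I_G$ and closed even walks in $G$: the primitive binomial $g$ corresponds to a primitive closed even walk $w$, each $u_i$ to a closed even walk $w_i$, and multiplication by a monomial $p_i$ to the attachment of extra edges. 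Primitivity of $g$ becomes the statement that $w$ admits no proper closed even subwalk supported on a subset of its edges (counted with multiplicity).

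Combining the primitivity of $w$ with the hypothesis that $\mathcal{U}_A$ is a minimal generating set---so that no $u_i$ lies in the ideal generated by the remaining $u_j$'s---I aim to force $g = \pm u_j$ for some index $j$. The rough strategy is that if every $u_i$ appearing in the expansion corresponds to a walk $w_i$ strictly smaller than $w$, then the way the walks $w_i$ together with the pendant edges encoded by the $p_i$ tile $w$ should yield a proper closed even subwalk of $w$, contradicting primitivity; otherwise some $w_j$ coincides with $w$, and matching supports shows $g = \pm u_j \in \mathcal{U}_A$.

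The principal obstacle is carrying out this ``tiling'' step rigorously, since cancellation in $\sum_i p_i u_i = g$ must be tracked at the level of individual edges of $G$. The favorable feature of graph ideals---each polynomial variable corresponds to a unique edge---makes edge-by-edge bookkeeping feasible, but genuine care is required to control how distinct walks $w_i$ can overlap, reverse orientation, or cancel, and to ensure that enough of the combinatorics of $w$ survives the decomposition to produce the needed subwalk. I expect this is where the bulk of the argument, and any appeal to graph-theoretic subtleties particular to $I_G$ (as opposed to general toric ideals), will be concentrated.
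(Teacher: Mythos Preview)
Your reduction to proving ``robust $\Rightarrow \mathcal{U}_G = \mathcal{G}_G$'' is the right target (the stated equivalence ``robust $\Leftrightarrow \mathcal{U}_G = \mathcal{G}_G$'' is false as written---two squares sharing an edge have $\mathcal{U}_G=\mathcal{G}_G$ yet are not robust---but only the forward implication matters for the theorem, and that is what you pursue).

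The substantive gap is the ``tiling'' step. As you present it, the argument reads: write $g=\sum p_iu_i$, observe that some term of some $u_j$ divides a term of $g$, and then invoke primitivity of $g$ together with minimality of $\mathcal{U}_G$ to force $g=\pm u_j$. But nothing here actually uses minimality; the only concrete fact extracted from the expansion is the divisibility of one monomial, and that is available whenever $\mathcal{U}_G$ merely generates. If your mechanism worked without a further graph-specific input, it would prove $\mathcal{U}_A=\mathcal{G}_A$ for every toric ideal, which is false. Your translation of primitivity into ``$w$ admits no proper closed even subwalk'' is also not quite right: primitivity forbids a binomial whose two terms divide the two terms of $B_w$, which is a parity-sensitive condition, not a subwalk condition. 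So the dichotomy ``either every $w_i$ is strictly smaller, or some $w_j$ coincides with $w$'' is not well-posed, and there is no clear route from the expansion to a contradiction.

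The paper's proof takes an entirely different, constructive route via the contrapositive. It uses the explicit description of $\mathcal{U}_G$ inside $\mathcal{G}_G$: a primitive walk lies in $\mathcal{U}_G$ if and only if it has no \emph{pure cyclic block}. Given a primitive $w$ with $B_w\notin\mathcal{U}_G$, one takes a pure cyclic block $B=(e_1,\ldots,e_n)$ and surgically deletes some of the odd subwalks hanging off $B$ to produce a new primitive walk $w'$ with no pure cyclic block, hence $B_{w'}\in\mathcal{U}_G$. When $n\ge 4$ the resulting $w'$ has two adjacent sinks in $B$, so it fails to be strongly primitive and therefore cannot be a minimal generator; when $n=3$ one builds two elements of $\mathcal{U}_G$ one of whose terms divides a term of the other, contradicting robustness via Lemma~\ref{ugbnondividing}. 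The essential inputs---the pure-block characterization of $\mathcal{U}_G$ and the strong-primitivity obstruction to minimality---are exactly the graph-theoretic facts your sketch never engages, and they are what make the theorem a statement about graph ideals rather than arbitrary toric ideals.
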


This result is quite surprising, as it states that minimality of $\mathcal{U}_G$ implies that of $\mathcal{G}_G$.  This behavior was witnessed for some classes of hypergraph ideals in \cite{GP13} as well.    It is open whether or not this holds for general toric ideals.  The proof of Theorem \ref{intro:urobustiffgrobust} relies on characterizations of $\mathcal{U}_G$ and $\mathcal{G}_G$ given in \cite{TT11, Vil95}.  We then use graph-theoretic analysis of primitive binomials to complete the proof.  

Next, we characterize all graphs $G$ that give rise to robust ideals.  Given Theorem \ref{intro:urobustiffgrobust} this turns out to be equivalent to requiring that every primitive binomial is \emph{indispensable}--that is, it is contained in every set of minimal generators of $I_G$.  The following theorem is stated in terms of graph theoretic properties of the circuits of the graph $G$.

\begin{thm}\label{charcircuits}
  $I_G$ is robust if and only if the following conditions are satisfied.
  \begin{description}\itemsep0pt\parskip0pt\parsep0pt
    \item[R1] No circuit of $G$ has an even chord,
    \item[R2] No circuit of $G$ has a bridge,
    \item[R3] No circuit of $G$ contains an effective crossing, and
    \item[R4] No circuit of $G$ shares exactly one edge (and no other vertices) with another circuit such that the shared edge is part of a cyclic block in both circuits.
  \end{description}
\end{thm}

The layout of the paper is as follows: In Section 2, we review the construction of toric graph ideals, definitions relating to their study, and characterizations of circuits, the Graver basis, and the universal Gr\"obner basis of such an ideal. In Section 3 we prove that a robust toric graph ideal is minimally generated by its Graver basis, which facilitates major results in Section 4, where we present a graph-theoretic characterization of such ideals. 
In Section 5 we apply our results to list toric graph ideals generated in low degrees.  Finally we conclude with some open questions in the setting of general toric ideals.




\section{Toric Graph Ideals}
Let $G$ be a finite, simple graph with edge set $E$ and vertex set $V$. We assume that $G$ has no loops or multiple edges. We define the \emph{toric graph ideal} of $G$ to be the toric ideal associated with the homomorphism $\phi_G\colon k[E] \rightarrow k[V]$ such that $\phi_G(e_{ij})=v_i v_j$. Equivalently, $\phi_G$ sends an edge of $G$ to the product of its corresponding verticies. We denote such an ideal by $I_G$.

A \emph{walk} is a finite sequence of the form
\begin{equation*}
  w=(\{v_{i_1}, v_{i_2}\},\{v_{i_2}, v_{i_3}\},\ldots,\{v_{i_{n-1}}, v_{i_n}\})
\end{equation*}
with each $v_{i_j}\in V(G)$ and $e_{i_j}=\{v_{i_{j}}, v_{i_{j+1}}\}\in E(G)$. We denote a walk $w$ either by its sequence of edges, $(e_1,\cdots, e_k)$, or by its sequence of vertices, $(v_1,\cdots, v_k)$. A \emph{closed} walk is a walk with $v_{i_1} = v_{i_n}$.  A walk $w=(e_{i_1},e_{i_2},\dots,e_{i_n})$ is called \emph{even} (resp.~\emph{odd}) is $n$ is \emph{even} (resp.~\emph{odd}).

Given an even walk, $w=(e_1, e_2, \ldots, e_{2k})$, we denote its corresponding binomial
\begin{equation*}
  B_w = \prod_{i=1}^k e_{2i-1} - \prod_{i=1}^k e_{2i} \in I_G.
\end{equation*}
The ideal $I_G$ is generated by binomials corresponding to closed even walks; furthermore, every binomial generator arises through this correspondence \cite[Lem.~1.1]{OH99}. 
Given such a walk, let $w^+$ denote the set of edges with odd indices. Similarly, define $w^-$ to be the set of edges with even indices. We define edges of odd index as \emph{odd edges} and define \emph{even edges} analogously. Two edges are said to have the same \emph{parity} if they are both in $w^+$ or $w^-$. 

Many ideal-theoretic properties of $I_G$ can be interpreted graph theoretically.  To develop this relationship, we present some basic facts about robust toric ideals. 
The circuits of graph ideals have a combinatorial characterization.

\begin{definition}
A \emph{simple path} of a graph $G$ is a walk $(v_1, v_2, \ldots, v_k)$ such that the $v_i$ are all distinct.
\end{definition}

\begin{prop}[\cite{Vil95}]\label{circuitchar}
$B_w$ is a circuit of $I_G$ iff $w$ is one of the following:\\
(C1) an even cycle\\
(C2) two odd cycles joined at a single vertex\\
(C3) two vertex-disjoint odd cycles joined by a simple path $w = (v_1, v_2, \ldots, v_k)$ with $k > 1$ such that the intersection of $w$ with the first (resp. second) cycle is the first (last) vertex of $w$. 
\end{prop}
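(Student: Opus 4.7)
The plan is to prove both implications by analyzing the subgraph $H := G[\mathrm{supp}(B_w)]$ induced by the edges appearing in $B_w$, and repeatedly using two ingredients: every binomial in $I_G$ comes from a closed even walk (by the cited lemma of Ohsugi--Hibi), and trees admit no nontrivial closed even walks, so any subgraph without cycles contributes nothing to $I_G$.

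For the \emph{if} direction, I would verify in each case that $B_w$ is irreducible and that $\mathrm{supp}(B_w)$ is minimal among supports of binomials in $I_G$. In case (C1), the two monomials of $B_w$ use the two disjoint ``color classes'' of edges around the even cycle, so $B_w$ is automatically irreducible; deleting any edge leaves a path (a tree), which kills all binomials, so the support is minimal. In case (C2), the walk traverses the two odd cycles in sequence; since neither odd cycle can itself be split into balanced $w^+/w^-$ classes, the two monomials of $B_w$ are coprime, and deleting any edge leaves an odd cycle plus tree structure that admits no closed even walks. Case (C3) is the subtlest: each edge of the connecting path $P$ appears twice in $w$ and shows up (with suitable parities) in both monomials of $B_w$, but the edges of $C_1$ and $C_2$ distinguish the monomials, yielding irreducibility; for minimality, deleting a path edge disconnects the two odd cycles into components each admitting no closed even walks, while deleting an edge from either $C_i$ destroys that cycle and leaves only an odd-cycle-plus-tree.

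For the \emph{only if} direction, assume $B_w$ is a circuit, so $w$ is a closed even walk of minimal support and $B_w$ is irreducible. I would argue by the cycle structure of $H$. If $H$ is a forest, no closed walk exists, a contradiction. If $H$ contains an even cycle $C$, then $B_C \in I_G$ has support contained in $\mathrm{supp}(B_w)$; by minimality this forces $H = C$, giving (C1). Otherwise every cycle in $H$ is odd. A unicyclic $H$ with an odd cycle admits no nontrivial closed even walks (any closed walk must traverse the odd cycle an even number of times, forcing each edge to appear an even number of times total and making $B_w = 0$), so $H$ must contain at least two distinct odd cycles $C_1, C_2$. The key combinatorial fact is that if $C_1$ and $C_2$ share an edge, or share two or more vertices, then their symmetric difference (or an appropriate combination of arcs along shared vertices) contains an even cycle, contradicting our reduction to the no-even-cycle case. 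Hence $C_1, C_2$ either meet at exactly one vertex or are vertex-disjoint. In the vertex-disjoint case, connectedness of $H$ (forced because $w$ is a single closed walk) provides a path between them, and minimality of support forces this path to be simple with no extra branches and the only cycles in $H$ to be $C_1$ and $C_2$, yielding (C3); in the one-shared-vertex case, minimality similarly rules out extra edges and yields (C2).

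The main obstacle will be the final minimality argument in the ``only if'' direction: showing that $H$ contains no superfluous edges beyond the specified cycles and (in case (C3)) connecting path. Concretely, if $H$ had an extra edge $e$ attached as a branch or a chord, one must exhibit either a strictly smaller binomial in $I_G$ using $\mathrm{supp}(B_w) \setminus \{e\}$ (contradicting minimality of support) or a common variable factor in both monomials of $B_w$ (contradicting irreducibility). Handling chords requires reusing the symmetric-difference-creates-even-cycle trick to reduce to (C1), while handling branches requires the tree-kills-walks observation used throughout.
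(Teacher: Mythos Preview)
The paper does not prove this proposition at all: it is stated as a known result and attributed to \cite{Vil95}, with no argument given. So there is nothing in the paper to compare your approach against.

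That said, a few comments on the proposal itself. Your overall architecture (analyze the edge-induced subgraph $H$, use Ohsugi--Hibi to reduce to closed even walks, and exploit that trees and odd-unicyclic graphs carry no nontrivial binomials) is the standard route and is sound. However, your description of case (C3) contains an error: the path edges do \emph{not} appear in both monomials of $B_w$. If you trace the walk $(c_1, p, c_2, p^{-1})$, each path edge is traversed twice with the \emph{same} parity, so it appears \emph{squared in exactly one} of the two monomials (alternating which monomial as you move along $p$). The monomials are still coprime because the cycle edges separate them, and irreducibility follows because the cycle edges have exponent~$1$ (so the gcd of all exponents is~$1$); but your stated reason is wrong and would, if taken literally, make $B_w$ divisible by a monomial.

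In the ``only if'' direction, the step you flag as the main obstacle is indeed where the work lies. Your claim that two odd cycles sharing an edge or at least two vertices must produce an even cycle in their union needs care: if $C_1$ and $C_2$ share many vertices the arcs you want to recombine may themselves intersect, so the naive symmetric-difference argument does not immediately give a \emph{simple} even cycle. The clean way is to argue on the cycle space: if $H$ is bipartite you are in case (C1); if not, two odd cycles meeting in more than one vertex have $C_1 + C_2$ (mod~$2$) in the even-cycle subspace, and any nonzero element there contains an even simple cycle. This closes the gap.
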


Circuits of the aformentioned types (and, by abuse of notation, the associated walks) will be referred to as C1, C2, and C3 circuits, respectively.
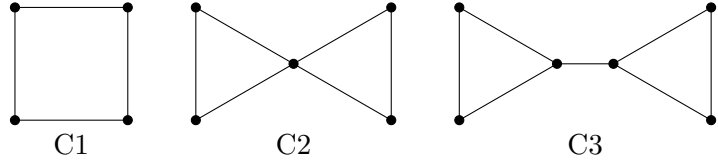
\begin{figure}[ht]
  \centering
  \begin{tikzpicture}[line cap=round,line join=round,x=1.5cm,y=1.5cm]
    \node (a) at (0,0) {\begin{tikzpicture}
      \coordinate (v1) at (0,0);
      \coordinate (v2) at (0,1);
      \coordinate (v3) at (1,1);
      \coordinate (v4) at (1,0);
      
      \draw (v1) -- (v2) -- (v3) -- (v4) -- (v1);
      
      \draw[fill=black] (v1) circle (0.04);
      \draw[fill=black] (v2) circle (0.04);
      \draw[fill=black] (v3) circle (0.04);
      \draw[fill=black] (v4) circle (0.04);
    \end{tikzpicture}};
    
    \node (b) at (a.east)[anchor=west,xshift=0.5cm] {\begin{tikzpicture}
      \coordinate (v1) at (0,0);
      \coordinate (v2) at (0,1);
      \coordinate (v3) at (0.866,0.5);
      \coordinate (v4) at (1.732,0);
      \coordinate (v5) at (1.732,1);
      
      \draw (v3) -- (v1) -- (v2) -- (v3) -- (v4) -- (v5) -- (v3);
      
      \draw[fill=black] (v1) circle (0.04);
      \draw[fill=black] (v2) circle (0.04);
      \draw[fill=black] (v3) circle (0.04);
      \draw[fill=black] (v4) circle (0.04);
      \draw[fill=black] (v5) circle (0.04);
    \end{tikzpicture}};
    
    \node (c) at (b.east)[anchor=west,xshift=0.5cm] {\begin{tikzpicture}
      \coordinate (v1) at (0,0);
      \coordinate (v2) at (0,1);
      \coordinate (v3) at (0.866,0.5);
      \coordinate (v3a) at (1.366,0.5);
      \coordinate (v4) at (2.232,0);
      \coordinate (v5) at (2.232,1);
      
      \draw (v3) -- (v1) -- (v2) -- (v3) -- (v3a) -- (v4) -- (v5) -- (v3a);
      
      \draw[fill=black] (v1) circle (0.04);
      \draw[fill=black] (v2) circle (0.04);
      \draw[fill=black] (v3) circle (0.04);
      \draw[fill=black] (v3a) circle (0.04);
      \draw[fill=black] (v4) circle (0.04);
      \draw[fill=black] (v5) circle (0.04);
    \end{tikzpicture}};
    
    \draw (a) node[anchor=north,yshift=-0.8cm] {C1};
    \draw (b) node[anchor=north,yshift=-0.8cm] {C2};
    \draw (c) node[anchor=north,yshift=-0.8cm] {C3};
  \end{tikzpicture}
  \caption{Examples of circuits of type C1, C2, and C3.}
\end{figure}
\par Similar conditions are needed for $B_w$ to be primitive.
\begin{prop}[{\cite[Lem.~3.2]{OH99}}]\label{ohprimchar}
If $B_w$ is primitive, then $w$ necessarily is of one of the following forms:\\
(P1) an even cycle\\
(P2) two odd cycles joined at a single vertex\\
(P3) $(c_1, w_1, c_2, w_2)$ where $c_1, c_2$ are vertex disjoint cycles and $w_1,w_2$ are walks which combine a vertex $v_1$ of $c_1$ and a vertex $v_2$ of $c_2$.
\end{prop}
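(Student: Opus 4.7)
The plan is to exploit the definition of primitivity at the combinatorial level: since $B_w = \prod w^+ - \prod w^-$, any closed even sub-walk $w'$ with $w'^+ \subseteq w^+$ and $w'^- \subseteq w^-$ (as multisets of edges) would give a binomial $B_{w'} \in I_G$ whose monomials strictly divide those of $B_w$, contradicting primitivity. So the whole argument reduces to showing that unless $w$ has one of the shapes P1, P2, P3, such a parity-respecting sub-walk can always be found.

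First I would look at the multigraph $H$ carrying the edges of $w$ with their multiplicities, and the underlying simple graph $G_w$ of edges actually occurring. If $G_w$ is a forest, a closed walk in it traverses each edge an even number of times, and a standard Eulerian / pairing argument produces a proper closed sub-walk with compatible parity, contradicting primitivity; hence $G_w$ has at least one cycle. Next I would analyze what cycles of $G_w$ can look like: if $G_w$ contains an even cycle $C$, then tracing $w$ along $C$ and using the alternating parity forces the edges of $C$ to alternate between $w^+$ and $w^-$, so $B_C$ itself is a closed even sub-binomial of $B_w$; primitivity then forces $w = C$, which is case P1. If $G_w$ has no even cycles, all its cycles are odd, and the observation that a single odd cycle does not give an even closed walk while the union of two odd cycles does, drives the rest of the classification. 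I would show that $G_w$ contains exactly two odd cycles $c_1, c_2$: three or more odd cycles let us pair them into two different primitive-style subwalks, and a reducing binomial can be extracted from the pair whose combined index-parity fits inside $w^+, w^-$.

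Finally, with the two odd cycles $c_1, c_2$ isolated, I would analyze how they sit inside $w$. If $c_1$ and $c_2$ share a vertex and nothing else of $G_w$ lies outside $c_1 \cup c_2$, one obtains P2. Otherwise, the connectivity between $c_1$ and $c_2$ inside $G_w$ is realized by the remaining portions of $w$, and I would show these portions consist of two walks $w_1, w_2$ joining a vertex of $c_1$ to a vertex of $c_2$, arranged in the cyclic order $(c_1, w_1, c_2, w_2)$; this is exactly P3. Any extra edges outside such a configuration, or extra traversals of $w_1, w_2$, can be peeled off as a parity-respecting closed even sub-walk.

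The main obstacle will be the parity bookkeeping in the last two steps: a sub-walk that works graph-theoretically is not automatically good for primitivity, because the extracted edges must respect the bipartition $w^+ \sqcup w^-$ coming from the index of each edge in the walk sequence. Handling this carefully — especially in the case of repeated edges in $w_1, w_2$ and in showing that three odd cycles always permit a correctly-alternating extraction — is where the real work lies; the rest is organized case analysis around the presence of even versus odd cycles in $G_w$.
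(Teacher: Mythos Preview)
The paper does not give its own proof of this proposition; it is quoted directly from \cite[Lem.~3.2]{OH99}. So there is no in-paper argument to compare against. That said, your sketch contains a genuine error that would prevent it from working.

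The step ``if $G_w$ contains an even cycle $C$, then the edges of $C$ must alternate between $w^+$ and $w^-$, so $B_C$ divides $B_w$ and primitivity forces $w=C$'' is false. Consider a primitive walk of type P3 in which the connecting walks $w_1$ and $w_2$ are edge-disjoint paths of length two through distinct intermediate vertices (exactly the configuration drawn in the paper's Figure~2). Then $G_w$ contains a $4$-cycle $C=(w_1,w_2^{-1})$. If you index the edges, say $c_1$ occupies positions $1,2,3$, then $w_1$ occupies positions $4,5$, then $c_2$ positions $6,7,8$, then $w_2$ positions $9,10$, the edges of $C$ have parities $-,+,+,-$ as you go around. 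They do \emph{not} alternate: the two cut vertices $v_1,v_2$ are sinks, so consecutive edges of $C$ there share parity. Hence neither monomial of $B_C$ divides a monomial of $B_w$, and the presence of an even cycle in $G_w$ is entirely compatible with $w$ being primitive of type P3.

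Once this step fails, your dichotomy ``either case P1, or all cycles of $G_w$ are odd'' collapses, and with it the plan to isolate exactly two odd cycles. The argument used in the literature (and refined in Proposition~\ref{primitivebinomial} above) does not look at arbitrary cycles of $G_w$ but rather at its \emph{block decomposition}: one shows every block is a cycle or a cut edge, every multiple edge is a doubled cut edge, and every cut vertex is a sink of its two incident blocks. The trichotomy P1/P2/P3 then follows from counting how many blocks are cycles. Your divisibility engine is the right one, but it must be applied to blocks rather than to cycles found in $G_w$.
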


\begin{figure}[ht]
  \centering
  \begin{tikzpicture}[line cap=round,line join=round,x=1.5cm,y=1.5cm]
      \coordinate (v1) at (0,0);
      \coordinate (v2) at (0,1);
      \coordinate (v3) at (0.866,0.5);
      \coordinate (v3a) at (1.366,0.5);
      \coordinate (v3b) at (1.866,1);
      \coordinate (v3c) at (2.366,0.5);
      \coordinate (v3d) at (2.866,0.5);
      \coordinate (v3e) at (1.866,0);
      \coordinate (v4) at (3.732,0);
      \coordinate (v5) at (3.732,1);
 
      \draw[black,thick] (v3) -- (v1) -- (v2) -- (v3);
      \draw[black,thick] (v3d) -- (v4) -- (v5) -- (v3d);
 
      \draw[color={rgb:red,125;green,33;blue,129},ultra thick] (v3) -- (v3a);
      \draw[color={rgb:red,125;green,33;blue,129},ultra thick] (v3c) -- (v3d);
 
      \draw[red,ultra thick] (v3a) edge node[anchor=south east]{$w_1$} (v3b);
      \draw[red,ultra thick] (v3b) -- (v3c);
      \draw[blue,ultra thick] (v3a) -- (v3e);
      \draw[blue,ultra thick] (v3e) edge node[anchor=north west]{$w_2$} (v3c);

      \draw[fill=black] (v1) circle (0.04);
      \draw[fill=black] (v2) circle (0.04);
      \draw[fill=black] (v3) circle (0.04) node[anchor=north]{$v_1$};
      \draw[fill=black] (v3a) circle (0.04);
      \draw[fill=black] (v3b) circle (0.04);
      \draw[fill=black] (v3c) circle (0.04);
      \draw[fill=black] (v3d) circle (0.04) node[anchor=north]{$v_2$};
      \draw[fill=black] (v3e) circle (0.04);
      \draw[fill=black] (v4) circle (0.04);
      \draw[fill=black] (v5) circle (0.04);
  \end{tikzpicture}
  \caption{Example of a P3 primitive walk.}
\end{figure}
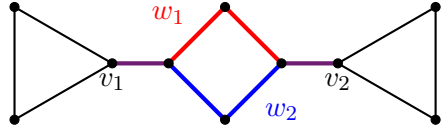
\par Primitive elements of the aforementioned types (and their associated walks) will be referred to as P1, P2, and P3 elements, respectively. Note that all P1 (resp.~P2) primitive elements are also C1 (resp.~C2) circuits, so primitive noncircuits must be of type P3.
\par However, there exist walks of the third type, P3, which give rise to non-primitive binomials. Necessary and sufficient conditions for primitivity require the introduction of new terminology. The definitions which follow are borrowed from \cite{TT11}.
\par A \emph{cut vertex} (resp. \emph{cut edge}) of a graph is a vertex (resp. edge) whose removal increases the number of connected components of a graph.  A graph is \emph{biconnected} if it is connected and does not contain a cut vertex. A \emph{block} is a maximal biconnected subgraph. A \emph{cyclic block} is a block which is 2-regular; namely, each vertex in the block is contained in exactly two edges. A \emph{sink} of a block is a common vertex of two odd or two even edges. 
\par While $G$ does not have multiple edges, we say an edge $e$ is a \emph{multiple edge of a walk} if $e$ appears more than once in the walk.  We define a walk to be \emph{strongly primitive} if it is primitive and does not contain two sinks within distance $1$ of each other in any cyclic block. These tools yield a graph-theoretic description of the Graver basis $\mathcal{G}_G$.

\begin{prop}[\cite{RTT12}]\label{primitivebinomial}
$B_w$ is primitive iff the following hold:\\
(1) every block of $w$ is a cycle or a cut edge\\
(2) every multiple edge is a cut edge and is traversed exactly twice \\
(3) every cut vertex of $w$ belongs to exactly 2 blocks and is a sink of both
\end{prop}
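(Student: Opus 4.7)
The plan is to prove both directions by combining the shape classification of Proposition 2.4 with parity and multiplicity arguments. Throughout, I identify $w$ with its underlying edge-multiset and speak of its blocks and cut vertices as those of the subgraph of $G$ induced by these edges.

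For necessity, suppose $B_w$ is primitive. By Proposition 2.4, $w$ is P1, P2, or P3, so each of its blocks is already an odd cycle, an even cycle, or a cut edge, which establishes (1). For (2), note that in a P1 or P2 walk every edge is used once, while in a P3 walk the repeated edges lie on the connecting walks $w_1,w_2$; if a non-cut edge were repeated, it would sit inside a cyclic block that is traversed more than once, and one could excise an even closed sub-walk around it whose binomial properly divides $B_w$, contradicting primitivity. A similar excision argument shows that any cut edge traversed four or more times admits a pair of traversals that can be deleted to yield a proper divisor of $B_w$. For (3), if a cut vertex $v$ belonged to three or more blocks, or if $v$ sat in two blocks but the two edges incident to $v$ in $w$ around a traversal were of opposite parity, then rerouting at $v$ (shortcutting through the cut vertex) produces a shorter closed even sub-walk whose binomial divides $B_w$, again contradicting primitivity.

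For sufficiency, assume (1)--(3). We first argue that $w$ has one of the shapes P1, P2, P3. The block-cut-vertex tree of $w$, combined with (3), forces the blocks to lie along a single path, with cut edges always adjacent to cycle blocks on each side (since (2) says every cut edge is traversed exactly twice, the walk enters and leaves it in a single back-and-forth). A parity bookkeeping argument using the sink condition then shows: either there are zero cut vertices (giving an even cycle, P1), one cut vertex joining two odd cycles (P2), or a cut-edge path joining two odd cycles (P3). Finally, suppose for contradiction that $B_w$ is not primitive, so there exists $B_{w'} = \mathbf{x}^{\mathbf{c}'}-\mathbf{x}^{\mathbf{d}'}\in I_G$ with $\mathbf{x}^{\mathbf{c}'}$ and $\mathbf{x}^{\mathbf{d}'}$ properly dividing the two monomials of $B_w$. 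Then $w'$ corresponds to a closed even walk whose edge-multiset is a proper sub-multiset of that of $w$, respecting the partition into odd and even edges. Conditions (1)--(3) make this impossible: removing any cut edge of $w$ disconnects the walk and destroys closure; removing an edge from a cyclic block together with matching-parity neighbors violates the sink condition or breaks evenness of the cycle's traversal.

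The main obstacle is the sufficiency direction, and within it the last step---ruling out proper even closed sub-walks. The delicate part is a case analysis around each cyclic block: one must show that any way of deleting edges while keeping the remaining multiset a closed even walk with matching parity partition would necessarily unbalance the parities at a sink cut vertex or leave an odd number of edges in an odd cycle block, contradicting either (3) or the shape constraints P1--P3 derived from (1)--(3). Once this combinatorial rigidity is made precise, primitivity of $B_w$ follows.
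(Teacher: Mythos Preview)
The paper does not prove this proposition; it is quoted from \cite{RTT12}. So there is no ``paper's own proof'' to compare against, and your attempt must stand on its own. Unfortunately it has a genuine structural error in the sufficiency direction and a nontrivial gap in necessity.

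In the sufficiency direction you assert that conditions (1)--(3) force the block--cut-vertex tree of $w$ to be a path, and hence that $w$ has one of the simple shapes P1, P2, or ``two odd cycles joined by a cut-edge path.'' This is false. Condition (3) says each cut \emph{vertex} lies in exactly two blocks, i.e.\ each cut-vertex node of the block--cut tree has degree two; it does \emph{not} bound the degree of block nodes. A cyclic block may contain many cut vertices, each a sink, and the block--cut tree then branches at that block. Figure~\ref{fig:primwithcycblock} in this very paper depicts exactly such a primitive walk: a central $n$-gon with an odd subwalk $w_i$ hanging off every vertex. This walk satisfies (1)--(3) and is primitive, yet its block--cut tree is a star, not a path, and it is not of the restricted P3 form you describe. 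Your entire shape analysis, and the subsequent ``rigidity'' argument ruling out proper sub-walks, is built on this mistaken premise and so does not go through.

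In the necessity direction, you invoke Proposition~\ref{ohprimchar} to say $w$ is P1, P2, or P3 and then claim ``each of its blocks is already an odd cycle, an even cycle, or a cut edge.'' For P3 this is exactly the content to be proved, not a consequence of the classification: the connecting walks $w_1,w_2$ in a P3 form are arbitrary walks, and their union can a priori induce blocks that are neither cycles nor single edges. Showing that primitivity forbids this requires real work (this is where \cite{RTT12} spends its effort), and your excision sketches for (2) and (3) are too vague to substitute for it.
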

Still more care is required to describe $\mathcal{U}_G$ and the minimal generators. A cyclic block of a primitiive walk is \emph{pure} if all of its edges have the same parity. We have the following result from \cite{TT11}:  

\begin{prop}\label{ugbelementchar}
Let $w$ be a primitive walk. Then $B_w\in \mathcal{U}_G$ iff no cyclic block of $w$ is pure.
\end{prop}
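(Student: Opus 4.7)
The plan is to prove both directions, with the harder ($\Rightarrow$) implication by a multiplicative obstruction using ``splitter'' primitive walks, and the ($\Leftarrow$) implication by constructing an adapted term order.

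For ($\Rightarrow$), I argue the contrapositive: if some cyclic block $B$ of $w$ is pure, say with all edges in $w^-$, then $B_w \notin \mathcal{U}_G$. The first structural observation is that purity forces every vertex of $B$ to be a cut vertex of $w$; otherwise the two edges of $B$ at that vertex would be consecutive in $w$ and hence carry opposite parities. Labelling $B$ cyclically as $u_1, e_1, u_2, e_2, \ldots, u_\ell, e_\ell, u_1$ and inserting between $e_{i-1}$ and $e_i$ the closed sub-walk $d_i$ of $w$ based at $u_i$, we obtain a cyclic decomposition $w = d_1, e_1, d_2, e_2, \ldots, d_\ell, e_\ell$. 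Purity additionally forces each $|d_i|$ to be odd, so that consecutive $e_i$'s land at the same parity of index in $w$. For each $i$, define the splitter walk $\tilde{w}_i = (d_i, e_i, d_{i+1}, e_i^{-1})$; its subgraph has $e_i$ as a cut edge traversed exactly twice, and one checks via Proposition~\ref{primitivebinomial} that $\tilde{w}_i$ is primitive. Writing $B_{\tilde{w}_i} = M_i^+ - M_i^-$ and tracking parities, the fact that each $d_j$ starts at an odd position in both $w$ and in the $\tilde{w}_i$'s it appears in yields
\[
M_i^+ = \prod_{e \in d_i \cap w^+} x_e \cdot \prod_{e \in d_{i+1} \cap w^+} x_e, \qquad M_i^- = \prod_{e \in d_i \cap w^-} x_e \cdot \prod_{e \in d_{i+1} \cap w^-} x_e \cdot x_{e_i}^2,
\]
so that $\prod_{i=1}^\ell M_i^+ = (x^{w^+})^2$ and $\prod_{i=1}^\ell M_i^- = (x^{w^-})^2$.

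Now suppose $B_w$ lies in the reduced Gröbner basis for some term order $\prec$ with $x^{w^+}$ as leading monomial. Since $M_i^+$ is a proper divisor of $x^{w^+}$ and $B_w$ must be a minimal generator of $\mathrm{in}_\prec(I_G)$, $M_i^+$ cannot lie in $\mathrm{in}_\prec(I_G)$; in particular, $M_i^+$ cannot be the leading monomial of $B_{\tilde{w}_i}$, forcing $M_i^- \succ M_i^+$ for every $i$. Multiplying these inequalities yields $(x^{w^-})^2 \succ (x^{w^+})^2$, contradicting $x^{w^+} \succ x^{w^-}$. To rule out $x^{w^-}$ as leading monomial, when $B$ is an even cycle one uses the C1 circuit of $B$ itself, both of whose monomials are products of $\ell/2$ edges of $B$ and hence proper divisors of $x^{w^-}$; when $B$ is odd, one instead extracts an odd sub-cycle $C_i$ from each odd-length detour $d_i$ (which exists since any closed walk of odd length contains an odd cycle) and assembles the appropriate primitive P2/P3 walks joining $B$ to each $C_i$ to run an analogous product argument.

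For ($\Leftarrow$), assume no cyclic block of $w$ is pure. In each cyclic block $B$ pick edges $e_B^+ \in B \cap w^+$ and $e_B^- \in B \cap w^-$. Using these pairs, construct a weight vector $\omega$ on $E(G)$ that, within each cyclic block, balances the $w^+$- and $w^-$-contributions, supplemented by generic positive weights on cut edges and refined by a tie-breaking monomial order. Verifying that $B_w$ belongs to the reduced Gröbner basis for the resulting term order reduces to checking that the segment $[w^+, w^-]$ is an edge of the Newton polytope of the common fiber of $w^\pm$; the non-purity hypothesis, via the structural characterization of primitive walks in Proposition~\ref{primitivebinomial}, rules out any other primitive binomial having a monomial that can slot in between $x^{w^+}$ and $x^{w^-}$.

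The main obstacle is the odd-$B$ case of the forward direction: the C1 circuit trick is unavailable, and one must carefully select odd sub-cycles inside each detour, verify primitivity of the resulting walks using Proposition~\ref{primitivebinomial}, and track their leading monomials to build a product identity paralleling the one for $M_i^\pm$. The backward direction's construction is conceptually clean but its verification requires confirming that non-purity of every cyclic block is precisely the combinatorial data needed to certify $[w^+, w^-]$ as an edge of the fiber polytope.
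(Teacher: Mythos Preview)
First, note that the paper does not actually prove this proposition: it is quoted from \cite{TT11} without argument, so there is no in-paper proof to compare against. I will therefore assess your proposal on its own merits.

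Your forward direction contains a genuinely nice idea. The splitter walks $\tilde w_i=(d_i,e_i,d_{i+1},e_i^{-1})$ are primitive, the product identities $\prod_i M_i^{\pm}=(x^{w^{\pm}})^2$ are correct, and the contradiction ruling out $x^{w^+}$ as the initial term of $B_w$ in any reduced Gr\"obner basis is clean. The even-$B$ subcase for ruling out $x^{w^-}$ (via the C1 circuit on $B$) also works. But the odd-$B$ subcase is a real gap, not merely a technicality you have yet to fill in. A P2/P3 circuit assembled from $B$, a simple path $p_i$ inside $d_i$, and an odd cycle $C_i\subset d_i$ has monomials that contain edges of $p_i$ and $C_i$; those edges live in $d_i$ and may well belong to $w^+$ rather than $w^-$. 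Consequently neither monomial of such a circuit need divide $x^{w^-}$, so the minimality argument you used before does not apply, and you provide no replacement product identity. A different construction is needed here; the sketch as written does not close the case.

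Your backward direction is not yet a proof. Invoking the fiber-polytope edge criterion for membership in $\mathcal{U}_G$ is legitimate, but it only reformulates the statement: the entire content of this implication is to verify that non-purity of every cyclic block forces $[w^+,w^-]$ to be an edge of that fiber, and you have not done this. The phrase ``pick $e_B^{\pm}$ and balance the weights in each block'' is a heuristic, not a construction; to finish you must actually exhibit $\omega$ and then prove that $x^{w^+}$ (say) is a \emph{minimal} generator of the resulting initial ideal, which means ruling out every other primitive binomial whose leading monomial could divide it. That is exactly where the structural information in Proposition~\ref{primitivebinomial} has to be used in earnest, and that analysis is absent from your outline.
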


In view of the containments $\mathcal{C}_G \subset \mathcal{U}_G \subset \mathcal{G}_G$ and the first two propositions, circuits of type C1 and C2 are always in $\mathcal{U}_G$. Then, by using Propositions \ref{primitivebinomial} and \ref{ugbelementchar}, we have the following result.

\begin{cor}\label{universalgroebnerbasiselt}
An element of the universal Gr\"{o}bner basis is either\\
(A) an even cycle\\
(B) two odd cycles joined at a single vertex\\
(C) a walk of the form $(c_1, w_1, c_2, w_2)$, where $c_1,c_2$ are vertex disjoint and $w_1, w_2$ are walks connecting them, subject to the conditions\\
\indent i) every block is a cycle or a cut edge\\
\indent ii) every multiple edge is a double edge and a cut edge\\
\indent iii) every cut vertex belonds to exactly 2 blocks and is a sink of both \\
\indent iv) no cyclic block is pure
\end{cor}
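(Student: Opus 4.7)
The plan is a direct synthesis of Propositions~\ref{ohprimchar}, \ref{primitivebinomial}, and \ref{ugbelementchar}, using the chain $\mathcal{C}_G\subset\mathcal{U}_G\subset\mathcal{G}_G$. First, I would note that any $B_w\in\mathcal{U}_G$ lies in the Graver basis, so $B_w$ is primitive, and Proposition~\ref{ohprimchar} then forces the underlying walk $w$ to be of type P1, P2, or P3. The corollary is obtained by refining each of these three cases.

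For the P1 and P2 cases I would compare with Proposition~\ref{circuitchar}: a P1 walk is literally a C1 circuit (an even cycle) and a P2 walk is literally a C2 circuit (two odd cycles joined at a vertex). Both therefore lie in $\mathcal{C}_G\subset\mathcal{U}_G$ automatically, with no further conditions required. This gives exactly items (A) and (B) of the statement, and is precisely the reason those two cases carry no extra hypotheses while case (C) does.

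For the P3 case I would combine the two relevant propositions. A P3 walk is of the form $(c_1,w_1,c_2,w_2)$ with $c_1,c_2$ vertex-disjoint cycles connected by walks $w_1,w_2$ as in Proposition~\ref{ohprimchar}. Among such walks, Proposition~\ref{primitivebinomial} asserts that $B_w$ is primitive \emph{precisely} when conditions (i), (ii), (iii) hold (every block a cycle or cut edge; every multiple edge a cut edge traversed exactly twice; every cut vertex lying in exactly two blocks and a sink of both). Once primitivity is secured, Proposition~\ref{ugbelementchar} characterizes membership in $\mathcal{U}_G$ by the added requirement that no cyclic block be pure, which is exactly condition (iv). Conjoining these gives the description in (C).

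There is no real obstacle: the argument is essentially bookkeeping, glueing the Graver-basis characterization to the universal Gröbner basis criterion on a case-by-case basis. The one point worth flagging explicitly is that a P3 walk need not itself be a circuit, which is why conditions (i)--(iii) must be imposed alongside (iv), whereas cases (A) and (B) inherit membership in $\mathcal{U}_G$ for free from being circuits.
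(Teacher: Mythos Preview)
Your proposal is correct and follows essentially the same approach as the paper: the paper's justification invokes the containments $\mathcal{C}_G\subset\mathcal{U}_G\subset\mathcal{G}_G$, observes that P1 and P2 walks are circuits (hence automatically in $\mathcal{U}_G$), and then combines Propositions~\ref{primitivebinomial} and~\ref{ugbelementchar} to obtain conditions (i)--(iv) in the P3 case. Your write-up spells this out in slightly more detail, but the logic is identical.
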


Finally, to study the minimal generators of a toric graph ideal, we must understand how the closed even walks relate to the larger graph. An edge $f\in E$ is said to be a \emph{chord} of a walk $w$ if both of its vertices belong to $w$ but $f$ itself does not. Chords fall into three classes.  A \emph{bridge} $f=\{v_1,v_2\}$ of a primitive walk $w = (e_1, e_2, \ldots, e_{2k})$ is a chord such that $w$ contains two different blocks $B_1$, $B_2$ with $v_1\in B_1$ and $v_2\in B_2$. A chord $f= \{v_i,v_j\}$ that is not a bridge is called \emph{even} (resp.~\emph{odd}) if the walks $(e_1, e_2, \ldots, e_{i-1}, f, e_j , e_{j+1}, \ldots, e_{2k})$ and $(e_i, e_{i+1}, \ldots, e_{j-1}, f)$ are both even (resp.~odd). Note that a chord starting at a cut vertex is always a bridge, since it is contained in two distinct blocks. Due to a result in \cite{RTT12}, binomials that occur in a minimal generating set are necessarily strongly primitive and contain no even chords or bridges. (See Section 4)

Let $w=((v_1,v_2),(v_2,v_3),\dots,(v_{2n}, v_1))$ be a primitive walk.  Let $f=(v_i,v_j)$ and $f'=(v_k,v_\ell)$ be two odd chords such that $j-i, \ell-k\in 2\N$, with $1\le i<j\le 2n$ and $1\le k<l\le 2n$. Then, $f$ and $f'$ \emph{cross effectively} if $i-k$ is odd and either $i<k<j<\ell$ or $k<i<\ell<j$. Note that if two odd chords $f$ and $f'$ cross effectively in $w$, then all their vertices are in the same
cyclic block of $w$. 

From here, if $w$ is a walk of $G$, we say $w^{-1}$ to denote $w$ traversed in the opposite direction. So, if $w=(e_1,\dots,e_n)$, then $w^{-1}=(e_n,\dots,e_1)$.

\section{Graver Bases and Robustness}
We say that the toric graph ideal $I_G$ is \emph{robust} if $\mathcal{U}_G$ is a minimal generating set for $I_G$.  We call a graph $G$ robust if $I_G$ is robust. Robustness is a relatively strong property as it ensures, for instance, that all initial ideals have the same minimal number of generators:
\begin{equation*}
  \mu(I_G) = \mu(\mathrm{in}_<I_G)~\text{for all term orders $<$.}
\end{equation*}
In terms of the binomials themselves we will use the following necessary condition:
\begin{lemma}\label{ugbnondividing}
If $I_G$ is robust, then no term of an element of $\mathcal{U}_G$ can divide a term of another element of $\mathcal{U}_G$.
\end{lemma}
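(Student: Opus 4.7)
The strategy is to leverage the fact that robustness forces $\mathcal{U}_G$ to coincide with the reduced Gr\"obner basis for every term order $<$, which by its very definition rules out the divisibility configuration we want to exclude.

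For the first step, I would show that $\mathcal{U}_G = \mathcal{G}_{<}$ for every term order $<$, where $\mathcal{G}_{<}$ denotes the reduced Gr\"obner basis with respect to $<$. By the definition of the universal Gr\"obner basis as the union of all reduced Gr\"obner bases, $\mathcal{G}_{<} \subseteq \mathcal{U}_G$. On the other hand, $\mathcal{G}_{<}$ generates $I_G$, so $|\mathcal{G}_{<}| \ge \mu(I_G)$; and robustness gives $\mu(I_G) = |\mathcal{U}_G|$. Combining these with the inclusion $\mathcal{G}_{<} \subseteq \mathcal{U}_G$ forces equality $\mathcal{G}_{<} = \mathcal{U}_G$; in other words, $\mathcal{U}_G$ is itself the reduced Gr\"obner basis for \emph{every} term order.

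For the second step, suppose for contradiction that distinct binomials $f, g \in \mathcal{U}_G$ admit a term $a$ of $f$ dividing a term $b$ of $g$. Write $f = a - a'$ and choose a term order $<$ making $a$ the leading term of $f$; since $f$ is a binomial this is always possible (pick a weight functional $\omega$ with $\omega\cdot\deg a > \omega\cdot\deg a'$, then refine by any tie-breaking order). By the first step, under this $<$ the set $\mathcal{U}_G = \mathcal{G}_{<}$ is a reduced Gr\"obner basis, so by the defining property of reducedness no term of $g$ is divisible by $\mathrm{in}_{<}(f) = a$. This contradicts $a \mid b$.

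The only non-formal step is the cardinality collapse in the first paragraph; once that is in place, everything else is essentially a direct transcription of the definition of a reduced Gr\"obner basis. In particular, no graph-theoretic input is needed for this lemma, which is why it serves as a general tool and is placed early in the section.
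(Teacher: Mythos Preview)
Your argument is correct, and it takes a somewhat different route from the paper's. The paper argues directly at the level of one term order: it picks a variable $x$ dividing $m_1$ but not $m_2$ (using that $I_G$ is prime, so the two terms of a binomial in $\mathcal{U}_G$ are coprime), takes lex with $x$ first so that $m_1=\mathrm{in}_<f$ divides $\mathrm{in}_<g$, and then observes that $\mu(\mathrm{in}_< I_G) < \lvert \mathcal{U}_G\rvert = \mu(I_G)$, contradicting the standard inequality $\mu(I)\le\mu(\mathrm{in}_< I)$. Your proof instead establishes the stronger intermediate fact that robustness forces $\mathcal{U}_G$ to equal the reduced Gr\"obner basis for \emph{every} term order, and then reads off the conclusion from the definition of reducedness. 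The paper's version is a touch shorter and invokes the semicontinuity of $\mu$ under initial degeneration; yours avoids that inequality entirely and yields an independently interesting structural statement about $\mathcal{U}_G$. Either way, as you note, no graph-specific input is needed, and the lemma holds for robust toric ideals in general.
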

\begin{proof}
  Suppose $\mathcal{U}_A$ contains binomials, 
  $$f = m_1 - m_2, \ g = n_1 - n_2 $$
  with $m_1$ dividing $n_1$.  Then some variable $x$ divides $m_1$ but not $m_2$ by primality of $I_G$.  Taking $<$ to be the Lex term order with $x$ first, we see that $(\mathrm{in_<}\:f) \mid (\mathrm{in_<}\:g).$ Thus, $\mu(\mathrm{in}_<I_G)) < \lvert \mathcal{U}_G \rvert = \mu(I_G)$, a contradiction.
\end{proof}

Our first main result states that the containment $\mathcal{U}_G\subset \mathcal{G}_G$ is an equality if $G$ is a robust graph.

\begin{thm}\label{urobustiffgrobust}
$I_G$ is robust iff it is minimally generated by its Graver basis
\end{thm}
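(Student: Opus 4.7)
The reverse direction is a quick counting argument: if $\mathcal{G}_G$ minimally generates $I_G$, then $|\mathcal{G}_G| = \mu(I_G)$, and since $\mathcal{U}_G \subseteq \mathcal{G}_G$ also generates $I_G$, one has $|\mathcal{U}_G| \ge \mu(I_G) = |\mathcal{G}_G|$; combined with the inclusion this forces $\mathcal{U}_G = \mathcal{G}_G$, so $\mathcal{U}_G$ is itself a minimal generating set and $I_G$ is robust.

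For the forward direction, suppose $I_G$ is robust. The same counting shows that $\mathcal{G}_G$ is minimal iff $\mathcal{U}_G = \mathcal{G}_G$, so by Proposition~\ref{ugbelementchar} it suffices to prove that no primitive walk of $G$ admits a pure cyclic block. The plan is to argue by contradiction: supposing $w$ is primitive with a pure cyclic block $D$ all of whose edges lie in $w^+$, I will exhibit two elements $f, g \in \mathcal{U}_G$ such that some term of $f$ divides some term of $g$, contradicting Lemma~\ref{ugbnondividing}.

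First, a structural analysis. Since consecutive positions of $w$ alternate in parity, $D$ cannot be traversed in any consecutive stretch of two or more edges; hence every vertex $v_0, \ldots, v_{k-1}$ of $D$ (with $k=|D|$) is a cut vertex, and $D$ is traversed in $k$ pieces of length one. By Proposition~\ref{primitivebinomial}(3), each $v_i$ belongs to exactly one further block $X_i$, and in the generic case where $X_i$ is a cycle entered and exited at $v_i$ only, the sink condition at $v_i$ in $X_i$ together with pureness of $D$ forces $X_i$ to be an \emph{odd} cycle. Writing $d_j = v_j v_{j+1}$ for the edges of $D$ (indices modulo $k$), take $g$ to be the C3 circuit obtained from $X_0$ and $X_1$ joined by the \emph{long} simple path $v_0, v_{k-1}, \ldots, v_1$ in $D$, and take $f$ to be the C1 circuit $B_D$ if $k$ is even, or the C2 circuit $B_{D \cup X_0}$ if $k$ is odd. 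Both $f$ and $g$ lie in $\mathcal{U}_G$ by Proposition~\ref{circuitchar}. A direct computation shows that each of $d_1, \ldots, d_{k-1}$ is traversed twice in the walk of $g$, each time at positions of the same parity, so one term of $g$ contains the odd-indexed $d_j$'s each squared (along with certain edges of $X_0$ and $X_1$), while a matching term of $f$ contains those same $d_j$'s unsquared (along with a subset of $X_0$-edges already appearing in the matching term of $g$). Hence a term of $f$ divides a term of $g$, contradicting Lemma~\ref{ugbnondividing}.

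The main obstacle lies in the structural analysis beyond the generic case: in principle $X_i$ could be a cut edge or a cycle with additional cut vertices of $w$, in which case one must chase further through the block-tree of $w$ to locate genuine odd cycles near $D$ playing the roles of $X_0$ and $X_1$ above. Once such substitutes are identified, the circuit construction and the divisibility calculation proceed essentially as in the generic case.
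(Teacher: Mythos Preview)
Your reverse direction is fine and matches the paper's. The forward direction takes a genuinely different route from the paper, so let me compare, and then point to where your argument is incomplete.

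\textbf{Comparison with the paper.} The paper argues the contrapositive by first reducing (via a somewhat delicate inductive construction) to a primitive walk with \emph{exactly one} pure cyclic block $B$, and then splits into two cases: if $|B|\ge 4$ it produces a single element of $\mathcal{U}_G$ that fails strong primitivity (two adjacent sinks), hence is not a minimal generator; if $|B|=3$ it produces two elements of $\mathcal{U}_G$ related by term--divisibility, exactly as in your strategy. Your idea of using circuits $f,g\in\mathcal{C}_G\subseteq\mathcal{U}_G$ with a term of $f$ dividing a term of $g$ is cleaner in one respect: because $f$ and $g$ are circuits, their membership in $\mathcal{U}_G$ is automatic, whereas the paper must verify primitivity and absence of pure blocks for its auxiliary walks. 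Your argument also avoids any case split on $|D|$ beyond the parity of $k$.

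\textbf{The gap.} The generic case (each $X_i$ an odd cycle meeting $w$ only at $v_i$) is handled correctly, and your parity reasoning there is sound. The problem is the final paragraph: you identify that $X_i$ may instead be a cut edge or a cycle with further cut vertices, and you assert that one ``chases through the block-tree'' to locate substitute odd cycles $c_0,c_1$, after which ``the circuit construction and the divisibility calculation proceed essentially as in the generic case.'' This is where real work remains. You need (i) an argument that odd cyclic blocks actually exist on the $v_0$-side and $v_1$-side of $D$ (this is the content of the paper's Lemmas~\ref{primcutvert}--\ref{findpathlem}, which you would have to reprove), (ii) an argument that the resulting simple paths $q_0,q_1$ and cycles $c_0,c_1$ are pairwise vertex-disjoint apart from the prescribed incidences (this is where the block--cut tree of $W$ is used: removing $D$ separates the subtrees hanging at the $v_i$), and (iii) a redo of the divisibility check for $f$ and $g$ now that both carry extra factors from $q_0$ and, in the odd-$k$ case, from $c_0$. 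Point (iii) does go through---the $q_0$- and $c_0$-contributions to $f$ occur with the same parities and no higher multiplicity than in $g$---but it is not literally ``essentially as in the generic case''; one must check that the shared portions $(c_0,q_0)$ sit at identical positions in the two walks. As written, the proof is a correct outline but not a complete argument.
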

\begin{proof}
If $I_G$ is minimally generated by $\mathcal{G}_G$ then since $\mathcal{U}_G\subset \mathcal{G}_G$, and both generate $I_G$, it follows that $\mathcal{U}_G$ is also a minimal generating set.  Hence $I_G$ is robust. 

To prove the other direction we will prove the contrapositive.  We assume that there is a primitive walk $w$ of $G$ whose corresponding binomial $b_w = w^+ - w^-$ is not in $\mathcal{U}_G$, then we construct another primitive walk $w'$ whose binomial is in $U_G$ but is not minimal, so that $G$ is not robust.

\par Let such a $w$ as above be given. Since $b_w \notin \mathcal{U}_G$, by Corollary \ref{universalgroebnerbasiselt}, $w$ must contain at least one pure cyclic block $B$. First, we want to show that we can assume, without loss of generality, that $w$ contains exactly one pure cyclic block.

\par Suppose that $w$ is primitive and contains more than one pure cyclic block. There can only be finitely many blocks since $G$ is finite; pick one and call it $B$. Thus, $B$ can be written as $(e_1, e_2, \ldots, e_n)$, where we assume that all of the $e_j$ belong to $w^-$. Then $w$ must be of the form $(w_1, e_1, w_2, e_2, \ldots, w_n, e_n),$ where each $w_j$ is an odd subwalk of $w$ that starts and ends at vertex $j$, as in Figure \ref{fig:primwithcycblock}.
\begin{figure}[ht]
  \centering
  \begin{tikzpicture}[line cap=round,line join=round,x=2cm,y=2cm]
    \coordinate (1) at (0,.75);
    \coordinate (2) at (-.75,0);
    \coordinate (3) at (-.45,-.75);
    \coordinate (4) at (.45,-.75);
    \coordinate (5) at (.75,0);

    \draw (1) node [anchor=north]{$1$};
    \draw (2) node [anchor=west]{$2$};
    \draw (3) node [anchor=south west]{$3$};
    \draw (4) node [anchor=south east]{$n-1$};
    \draw (5) node [anchor=east]{$n$};

    \draw (1) edge node [anchor=south east]{$e_1$} (2);
    \draw (2) edge node [anchor=east]{$e_2$} (3);
    \draw (3) edge [dotted] node [anchor=north]{} (4);
    \draw (4) edge node [anchor=west]{$e_{n-1}$} (5);
    \draw (5) edge node [anchor=south west]{$e_n$} (1);
    \draw (1) edge [loop, distance=2cm] node [anchor=south]{$w_1$} (1);
    \draw (2) edge [loop, out=120, in=210, distance=2cm] node [anchor=south east]{$w_2$} (2);
    \draw (3) edge [loop, out=180, in=270, distance=2cm] node [anchor=north]{$w_3$} (3);
    \draw (4) edge [loop, out=270, in=0, distance=2cm] node [anchor=north]{$w_{n-1}$} (4);
    \draw (5) edge [loop, out=330, in=60, distance=2cm] node [anchor=south west]{$w_n$} (5);

    \draw [fill=black] (1) circle (0.03);
    \draw [fill=black] (2) circle (0.03);
    \draw [fill=black] (3) circle (0.03);
    \draw [fill=black] (4) circle (0.03);
    \draw [fill=black] (5) circle (0.03);
  \end{tikzpicture}
  \caption{A primitive walk $w$ with cyclic block $(e_1,e_2,\ldots,e_n)$ where $n \ge 4$.}
  \label{fig:primwithcycblock}
\end{figure}

Now, let $w'$ be the walk $w' = (w_1, e_1, ..., w_{n-2}, e_{n-2}, e_{n-1}, e_n)$ that follows the same path as $w$, only skipping over the last two odd walks $w_{n-1}$ and $w_n$. Then, $B$ is still a cyclic block of $w'$, but it is not a pure cyclic block since the edge $e_{n-1}$  belongs to $w^+$ instead of $w^-$. This also means that $w'$ has strictly fewer pure cyclic blocks than $w$ does. It is possible that $w'$ now has no pure cyclic blocks, if all of the other pure cyclic blocks of $w$ were contained in $w_{n-1}$ or $w_n$. In this case, we pick a different starting vertex so that at least some of the remaining cyclic blocks of $w$ are not in $w_{n-1}$ or $w_n$. Therefore, $w'$ has at least one pure cyclic block, since we assumed that $w$ had more than one pure cyclic block.

\par We check that $w'$ is still primitive in this construction. By Proposition \ref{primitivebinomial}, we need to check three conditions. Since $w$ is primitive, every block of $w$ is cyclic or a cut edge. Thus, since $w'$ is a subwalk of $w$, all blocks of $w'$ are blocks of $w$, meaning the first condition is satisfied. Similarly, any multiple edge of $w'$ is also a multiple edge of $w$, so the edges of $w'$ must be cut edges and traversed exactly twice. Finally, as a result of the construction of $w'$, no additional cut verticies were made. In particular, the set of cut vertices of $w'$ is exactly the set of cut vertices of $w$, excluding the cut vertices in $w_{n-1}, w_n$ and the vertices that connect these two subwalks to $B$. All three conditions are therefore satisfied, so $w'$ is primitive.

\par If $w'$ has more than one pure cyclic block, we can repeat this construction on another pure cyclic block of $w'$ to get another primitive subwalk that has strictly fewer pure blocks than $w'$ does, but that has at least one. We can repeat this process until it terminates at a walk with exactly one pure cyclic block. Call this walk $w$.

\par Since $w$ has one pure cyclic block $B$, the binomial $b_w$ is not contained in $U_G$ but is primitive. Suppose that $B$ has at least $4$ edges, so $n \geq 4$. Now, repeat the construction above to get a subwalk $w'$ of $w$ with no pure cyclic blocks. By Corollary \ref{universalgroebnerbasiselt}, this means that $w'$ is contained in the universal Gr\"obner basis of $I_G$. However, the edges $e_1, e_2, \ldots, e_{n-2}, e_n$ are all contained in $w^-$, which means that the vertices $1, 2, \ldots, n-2$ are all sinks of the block $B$ of $w'$. Since $n \geq 4$, vertices $1$ and $2$ are both sinks, and they have distance one since they are connected by the edge $e_1$, so that $w'$ is not strongly primitive. By the result of \cite{RTT12} discussed above, this implies that $w'$ is not minimal. Since we have an element of $\mathcal{U}_G$ that is not minimal, thus not contained in a minimal set of generators, it must be the case that $\mathcal{U}_G$ is not a minimal generating set, so $G$ is not robust.

\par Finally, we consider the special case where $n=3$. That is, the single pure cyclic block $B$ of $w$ has only three edges $e_1, e_2, e_3$, as in Figure \ref{fig:primwithcycblock3}.

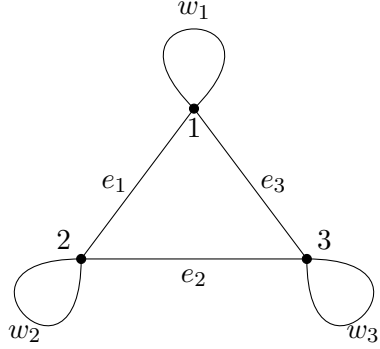
\begin{figure}[ht]
  \centering
  \begin{tikzpicture}[line cap=round,line join=round,x=2cm,y=2cm]
    \coordinate (1) at (0,1);
    \coordinate (2) at (-.75,0);
    \coordinate (3) at (.75,0);

    \draw (1) node [anchor=north]{$1$};
    \draw (2) node [anchor=south east]{$2$};
    \draw (3) node [anchor=south west]{$3$};

    \draw (1) edge node [anchor=east]{$e_1$} (2);
    \draw (2) edge node [anchor=north]{$e_2$} (3);
    \draw (3) edge node [anchor=west]{$e_3$} (1);
    \draw (1) edge [loop, distance=2cm] node [anchor=south]{$w_1$} (1);
    \draw (2) edge [loop, out=180, in=270, distance=2cm] node [anchor=north]{$w_2$} (2);
    \draw (3) edge [loop, out=270, in=0, distance=2cm] node [anchor=north]{$w_3$} (3);

    \draw [fill=black] (1) circle (0.03);
    \draw [fill=black] (2) circle (0.03);
    \draw [fill=black] (3) circle (0.03);
  \end{tikzpicture}
  \caption{A primitive walk $w$ with cyclic block $(e_1,e_2,\ldots,e_n)$ where $n = 3$.}\label{fig:primwithcycblock3}
\end{figure}

Now let $w'$ be the primitive walk obtained by the construction above, that is, $w' = (w_1, e_1, e_2, e_3)$. Let $w''$ be the closed even walk $w'' = (w_1, e_1, e_2, w_3, e_2, e_1)$. Neither $w'$ nor $w''$ have any pure cyclic blocks, since $B$ was the only pure cyclic block of $w$ by assumption, so as long as they are primitive, their corresponding binomials will be elements of $\mathcal{U}_G$. By the above construction, $w'$ is primitive, and it is easy to see that $w''$ is as well, using the fact that $w$ is primitive. Then, the binomial corresponding to the walk $w''$ is $b_{w''} = w_1^+e_2^2w_3^- - w_1^-e_1^2w_3^+$, where $w_j^+$ is the odd part of $w_j$, and $w_j^-$ is the even part of $w_j$. Similarly, the binomial corresponding to $w'$ is $b_{w'} = w_1^+e_2 - w_1^-e_1e_3$. By the above argument, both of these are elements of $\mathcal{U}_G$. However, one term of $b_{w'}$ divides a term of $b_{w''}$. By Corollary \ref{ugbnondividing}, this implies that $\mathcal{U}_G$ is not robust.
\end{proof}

The following proposition gives an application of Theorem \ref{urobustiffgrobust}.  It describes one modification to any graph G that preserves robustness.  Example \ref{amycounter} shows that modifying nonrobust graphs can often have unpredictable effects on $\mathcal{U}_G$.

 \begin{prop}\label{exchangeoneedgeforthree}
Let $G$ be a graph and $b$ be an edge.  Let $G'$ be the graph obtained from $G$ by replacing $b$ with three edges.  Then $|\mathcal{G}_G|=|\mathcal{G}_{G'}|$.  If $G$ is robust, then so is $G'$. 
 \end{prop}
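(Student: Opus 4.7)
My plan is to construct a natural bijection $\Phi$ between the primitive walks of $G$ and those of $G'$ by subdividing $b$, and then to translate divisibility between Graver-basis elements across this bijection. Write $b=\{u,v\}$ and interpret ``replacing $b$ with three edges'' as subdividing $b$: introduce new vertices $x,y$ of degree $2$, together with new edges $y_1=\{u,x\}$, $y_2=\{x,y\}$, $y_3=\{y,v\}$, so that $E(G')=(E(G)\setminus\{b\})\cup\{y_1,y_2,y_3\}$. Since every closed walk of $G'$ that visits $x$ or $y$ must traverse $(y_1,y_2,y_3)$ or $(y_3,y_2,y_1)$ consecutively, one obtains a natural bijection $\Phi$ between closed even walks of $G$ and of $G'$ by replacing each traversal of $b$ with a directed traversal of the subdivided path.

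I would first verify that $\Phi$ restricts to a bijection between primitive walks by checking the three conditions of Proposition~\ref{primitivebinomial}. For (1), a cyclic block of $w$ containing $b$ becomes a larger cyclic block of $\Phi(w)$, the cut-edge block $\{b\}$ (when present) splits into three cut-edge blocks $\{y_i\}$, and all other blocks are unchanged. For (2), each $y_i$ is traversed exactly as many times as $b$, and is a cut edge of $\Phi(w)$ exactly when $b$ is a cut edge of $w$. For (3), the subdivided path has odd length $3$, so each $b$-traversal shifts the positions of subsequent edges by the even number $2$; hence the parities of all non-subdivided edges are preserved, $y_1$ and $y_3$ inherit the parity of $b$ at each of its traversals, and $y_2$ inherits the opposite parity. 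This preserves the sink condition at the original cut vertices $u,v$. When $b$ is a cut edge of $w$ (so $x,y$ become new cut vertices of $\Phi(w)$), primitivity of $w$ forces both traversals of $b$ to share a common parity, which propagates through the subdivision to make each $y_i$ carry a single parity; hence both endpoints of each $y_i$ are sinks of the block $\{y_i\}$. This gives $\lvert\mathcal{G}_G\rvert=\lvert\mathcal{G}_{G'}\rvert$.

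Assume now that $G$ is robust, so by Theorem~\ref{urobustiffgrobust} the Graver basis $\mathcal{G}_G$ is a minimal generating set of $I_G$. To invoke Theorem~\ref{urobustiffgrobust} in the opposite direction for $G'$, I would show $\mathcal{G}_{G'}$ is a minimal generating set of $I_{G'}$. Since $\mathcal{G}_{G'}$ is itself a universal Gr\"obner basis of $I_{G'}$ (as $\mathcal{U}_{G'}\subset\mathcal{G}_{G'}$), the argument of Lemma~\ref{ugbnondividing} applies verbatim to $\mathcal{G}_{G'}$, and with its routine converse reduces minimality of $\mathcal{G}_{G'}$ to checking that no term of any element of $\mathcal{G}_{G'}$ divides a term of another. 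Writing $a_i$ and $c_i$ for the multiplicity of $b$ in $w_i^+$ and $w_i^-$ respectively (with $a_ic_i=0$ by primitivity), the construction of $\Phi$ yields
\begin{align*}
\Phi(w_i)^+ &= (w_i^+ / b^{a_i}) \cdot y_1^{a_i}\,y_2^{c_i}\,y_3^{a_i}, \\
\Phi(w_i)^- &= (w_i^- / b^{c_i}) \cdot y_1^{c_i}\,y_2^{a_i}\,y_3^{c_i}.
\end{align*}
Since the variables $y_1,y_2,y_3$ do not occur in any element of $\mathcal{G}_G$, any divisibility $\Phi(w_1)^\pm\mid\Phi(w_2)^\pm$ in $\mathcal{G}_{G'}$ directly forces the corresponding $w_1^\pm\mid w_2^\pm$ in $\mathcal{G}_G$, contradicting Lemma~\ref{ugbnondividing} applied to the minimal generating set $\mathcal{G}_G$.

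The step I expect to be the main obstacle is the sink verification at the new cut vertices in the primitivity check: one must exploit the subtle but essential fact that primitivity of $w$ forces all traversals of $b$ to share a common parity (otherwise $b$ divides $\gcd(w^+,w^-)$), and trace this property through the odd-length subdivided path to confirm each $y_i$ likewise carries a single parity in $\Phi(w)$.
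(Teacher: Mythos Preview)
Your proposal is correct and follows essentially the same approach as the paper: you define the same subdivision bijection $\Phi$ between walks, verify it restricts to a bijection on Graver bases, and then pull back a hypothetical term-divisibility in $\mathcal{G}_{G'}$ to one in $\mathcal{G}_G=\mathcal{U}_G$, contradicting Lemma~\ref{ugbnondividing}. The only difference is level of detail: the paper simply declares the primitivity bijection ``straightforward to check,'' whereas you spell out the block, multiple-edge, and sink conditions of Proposition~\ref{primitivebinomial}, including the key observation that a doubly-traversed cut edge in a primitive walk carries a single parity.
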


\begin{figure}[ht]
  \centering
  \begin{tikzpicture}[line cap=round,line join=round,x=1.5cm,y=1.5cm]
    \node (a) at (0,0) 
    {\begin{tikzpicture}
      \coordinate (v1) at (0,0);
      \coordinate (v2) at (1,0);

    \draw (v1) edge node [anchor=south]{$b$} (v2);
      
      \draw[fill=black] (v1) circle (0.04);
      \draw[fill=black] (v2) circle (0.04);

    \end{tikzpicture}};
    \node (b) at (a.east)[anchor=west,xshift=0.5cm]{$\mapsto$};
    \node (c) at (b.east)[anchor=west,xshift=0.5cm] 
    {\begin{tikzpicture}
      \coordinate (v1) at (0,0);
      \coordinate (v2) at (.25,.3);
      \coordinate (v3) at (.75,.3);
      \coordinate (v4) at (1,0);

    \draw (v1) edge node [anchor=south east]{$a$} (v2);
    \draw (v2) edge node [anchor=south]{$b'$} (v3);
    \draw (v3) edge node [anchor=south west]{$c$} (v4);
      
      \draw[fill=black] (v1) circle (0.04);
      \draw[fill=black] (v2) circle (0.04);
      \draw[fill=black] (v3) circle (0.04);
      \draw[fill=black] (v4) circle (0.04);
    \end{tikzpicture}};

    \draw (a) node[anchor=north,yshift=-0.8cm]{};
    \draw (b) node[anchor=north,yshift=-0.8cm]{};
    \draw (c) node[anchor=north,yshift=-0.8cm]{};
  \end{tikzpicture}
  \caption{Construction in Proposition \ref{exchangeoneedgeforthree}.}
  \label{fig:exchangeoneedgeforthree}
\end{figure}
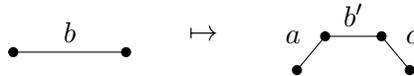

\begin{proof}
  Consider when we replace the edge $b \in E(G)$ with $\{a,b',c\}$ as in Figure \ref{fig:exchangeoneedgeforthree}, producing a new graph $G'$. Notice that any walk that contains one of $a,b',c$ must contain them all.  Let $\mathcal{L}_G$ denote the set of walks on $G$.   Define the map $\varphi: \mathcal{L}_G \to \mathcal{L}_{G'}$ which takes a walk $w$ to its image in $G$ replacing all instances of $b$ with $\{a,b',c\}$.  On binomials, 
  $$\varphi(mb^\ell - n) = m(ac)^\ell - n(b')^\ell$$
where $m, n$ are monomials not involving $b$.  It is straightforward to check that $\varphi$ provides a bijection between the primitive walks of $G$ and $G'$, proving that $|\mathcal{G}_G|=|\mathcal{G}_{G'}|$.  

Suppose that $G$ is robust.  We will show that $\mathcal{G}_{G'}$ is a minimal generating set for $I_{G'}$.  Let $\mathcal{G}_{G} = \{w_1, \ldots, w_k\}$ and  $\mathcal{G}_{G'} = \{w'_1, \ldots, w'_k\}$ where $\varphi(w_j) = w'_j$.  If the binomials $w'_j$ do not minimally generate $I_{G'}$ then one, say, $w'_1$ must be an polynomial linear combination of the others.  But then this must mean that one term of $w'_1$ is divisible by a term of another $w'_i$, say $w'_2$.  But then it follows that one term of $w_1$ is divisible by a term of $w_2$.  But by Theorem \ref{urobustiffgrobust} we have that $\mathcal{U}_G = \mathcal{G}_G$ and this is a contradiction by Lemma \ref{ugbnondividing}.

\end{proof}

\begin{example}\label{amycounter}
Notice that for non-robust graphs, the number of minimal generators and the set $\mathcal{U}_G$ are very senstive to changes in the graph $G$.  For example, consider the graphs $G$ and $G'$ in Figure \ref{fig:essentialrobust}.
These graphs are not robust.  The left graph has, $\mu(I_G) = 3$ while the graph on the right has $\mu(I_{G'})=4$.  The walks $w$ and $w'$ that traverse each edge in $G$ and $G'$ once are both primitive, but $w\notin \mathcal{U}_G$ whereas $w'\in \mathcal{U}_{G'}$.

\begin{figure}[ht]
  \centering
  \begin{tikzpicture}[line cap=round,line join=round,x=1.5cm,y=1.5cm]
    \node (a) at (0,0){
   \begin{tikzpicture}[line cap=round,line join=round,x=0.5cm,y=0.5cm]
      \clip (-4.5,-3) rectangle (4.5,3.5);
      \coordinate (1) at (-4.05,2.34);
      \coordinate (9) at (-2.7,0);
      \coordinate (2) at (-1.35,2.34);
      \coordinate (3) at (1.35,2.34);
      \coordinate (4) at (4.05,2.34);
      \coordinate (5) at (2.7,0);
      \coordinate (6) at (0,0);
      \coordinate (8) at (-1.35,-2.34);
      \coordinate (7) at (1.35,-2.34);

      \draw (1) edge (2);
      \draw (2) edge (3);
      \draw (3) edge (4);
      \draw (4) edge (5);
      \draw (3) edge (5);
      \draw (3) edge (6);
      \draw (6) edge (7);
      \draw (7) edge (8);
      \draw (6) edge (8);
      \draw (2) edge (6);
      \draw (2) edge (9);
      \draw (1) edge (9);
      
    \draw (2) edge node [anchor=south]{$+$} (3);
    \draw (3) edge node [anchor=west]{$+$} (6);
    \draw (2) edge node [anchor=east]{$+$} (6);

      \draw[fill=black] (1) circle (0.12);
      \draw[fill=black] (2) circle (0.12);
      \draw[fill=black] (3) circle (0.12);
      \draw[fill=black] (4) circle (0.12);
      \draw[fill=black] (5) circle (0.12);
      \draw[fill=black] (6) circle (0.12);
      \draw[fill=black] (7) circle (0.12);
      \draw[fill=black] (8) circle (0.12);
      \draw[fill=black] (9) circle (0.12);
    \end{tikzpicture}};
    \node (b) at (a.east)[anchor=west,xshift=0.5cm]{$\mapsto$};
    \node (c) at (b.east)[anchor=west,xshift=0.5cm]{

       \begin{tikzpicture}[line cap=round,line join=round,x=0.5cm,y=0.5cm]
      \clip (-4.5,-3) rectangle (4.5,3.9);
      \coordinate (1) at (-4.05,2.34);
      \coordinate (9) at (-2.7,0);
      \coordinate (2) at (-1.35,2.34);
      \coordinate (3) at (1.35,2.34);
      \coordinate (4) at (4.05,2.34);
      \coordinate (5) at (2.7,0);
      \coordinate (6) at (0,0);
      \coordinate (8) at (-1.35,-2.34);
      \coordinate (7) at (1.35,-2.34);
      \coordinate (10) at (-.6,3.05);
      \coordinate (11) at (.6,3.05);

      \draw (1) edge (2);
      \draw (2) edge (10);
      \draw (11) edge (10);
      \draw (3) edge (11);
      \draw (3) edge (4);
      \draw (4) edge (5);
      \draw (3) edge (5);
      \draw (3) edge (6);
      \draw (6) edge (7);
      \draw (7) edge (8);
      \draw (6) edge (8);
      \draw (2) edge (6);
      \draw (2) edge (9);
      \draw (1) edge (9);

    \draw (2) edge node [anchor=south east]{$+$} (10);
    \draw (3) edge node [anchor=west]{$+$} (6);
    \draw (2) edge node [anchor=east]{$+$} (6);
    \draw (10) edge node [anchor=south]{$-$} (11);
    \draw (3) edge node [anchor=south west]{$+$} (11);

      \draw[fill=black] (1) circle (0.12);
      \draw[fill=black] (2) circle (0.12);
      \draw[fill=black] (3) circle (0.12);
      \draw[fill=black] (4) circle (0.12);
      \draw[fill=black] (5) circle (0.12);
      \draw[fill=black] (6) circle (0.12);
      \draw[fill=black] (7) circle (0.12);
      \draw[fill=black] (8) circle (0.12);
      \draw[fill=black] (9) circle (0.12);
      \draw[fill=black] (10) circle (0.12);
      \draw[fill=black] (11) circle (0.12);
    \end{tikzpicture}};

    \draw (a) node[anchor=north,yshift=-0.8cm]{};
    \draw (b) node[anchor=north,yshift=-0.8cm]{};
    \draw (c) node[anchor=north,yshift=-0.8cm]{};
  \end{tikzpicture}
  \caption{Example where $\lvert\mathcal{M}_G\rvert < \lvert\mathcal{M}_{G'}\rvert$.}
  \label{fig:essentialrobust}
\end{figure}

\end{example}

\begin{example}\label{amycounter2}
The reverse implication in the Proposition is false as shown in Figure \ref{noreverseamy}.  The graph on the right is obtained by contracting three edges into one. The graph on the left is robust, but the one of the right is not.

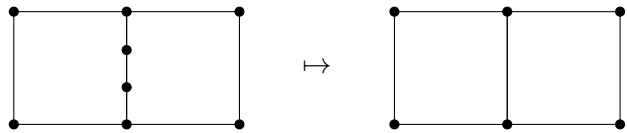
\begin{figure}[ht]
  \centering
  \begin{tikzpicture}[line cap=round,line join=round,x=1.5cm,y=1.5cm]
    \node (a) at (0,0) 
    {\begin{tikzpicture}
      \coordinate (v1) at (0,0);
      \coordinate (v2) at (0,1);
      \coordinate (v3) at (1,1);
      \coordinate (v4) at (1,0);
      \coordinate (v5) at (2,0);
      \coordinate (v6) at (2,1);
      \coordinate (v7) at (1,0.66);
      \coordinate (v8) at (1,0.33);
      
      \draw (v1) -- (v2) -- (v3) -- (v4) -- (v1);
      \draw (v5) -- (v6) -- (v3) -- (v4) -- (v5);
      
      \draw[fill=black] (v1) circle (0.04);
      \draw[fill=black] (v2) circle (0.04);
      \draw[fill=black] (v3) circle (0.04);
      \draw[fill=black] (v4) circle (0.04);
      \draw[fill=black] (v5) circle (0.04);
      \draw[fill=black] (v6) circle (0.04);
      \draw[fill=black] (v7) circle (0.04);
      \draw[fill=black] (v8) circle (0.04);
    \end{tikzpicture}};
    \node (b) at (a.east)[anchor=west,xshift=0.5cm]{$\mapsto$};
    \node (c) at (b.east)[anchor=west,xshift=0.5cm] 
    {\begin{tikzpicture}
      \coordinate (v1) at (0,0);
      \coordinate (v2) at (0,1);
      \coordinate (v3) at (1,1);
      \coordinate (v4) at (1,0);
      \coordinate (v5) at (2,0);
      \coordinate (v6) at (2,1);
      
      \draw (v1) -- (v2) -- (v3) -- (v4) -- (v1);
      \draw (v5) -- (v6) -- (v3) -- (v4) -- (v5);
      
      \draw[fill=black] (v1) circle (0.04);
      \draw[fill=black] (v2) circle (0.04);
      \draw[fill=black] (v3) circle (0.04);
      \draw[fill=black] (v4) circle (0.04);
      \draw[fill=black] (v5) circle (0.04);
      \draw[fill=black] (v6) circle (0.04);
    \end{tikzpicture}};

    \draw (a) node[anchor=north,yshift=-0.8cm]{};
    \draw (b) node[anchor=north,yshift=-0.8cm]{};
    \draw (c) node[anchor=north,yshift=-0.8cm]{};
  \end{tikzpicture}
  \caption{Counterexample to the reverse implication.}\label{noreverseamy}
\end{figure}
\end{example}


\section{Characterization of Robust Graph Ideals}

We begin this section with a definition and characterization of indispensable walks:
\begin{definition}
A primitive walk $w$ of a graph $G$ is \emph{indispensible} if the corresponding binomial $B_w$ or its negation appears in every minimal generating set of $I_G$.
\end{definition}

\begin{prop}[{\cite[Thm.~4.14]{RTT12}}]\label{indispensable}
  A primitive walk $w$ is indispensable if and only if
  \begin{description}\itemsep0pt\parskip0pt\parsep0pt
    \item[I1] $w$ has no even chords,
    \item[I2] $w$ has no bridges,
    \item[I3] $w$ has no effective crossings, and
    \item[I4] $w$ is strongly primitive.
  \end{description}
\end{prop}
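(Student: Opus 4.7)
The plan is to use the standard characterization that a primitive binomial $B_w \in I_G$ is indispensable if and only if the only monomials $m$ with $\phi_G(m) = \phi_G(w^+)$ are $w^+$ and $w^-$; equivalently, no other primitive binomial $B_{w'} \in \mathcal{G}_G$ has a monomial dividing a monomial of $B_w$. Both implications then reduce to graph-theoretic constructions on walks, and the four conditions I1--I4 can be read as saying that none of the four natural ways of producing a ``smaller'' witness is available.

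For the $(\Rightarrow)$ direction I would argue the contrapositive: assuming $w$ violates one of I1--I4, I produce a primitive $B_{w'} \ne \pm B_w$ with a monomial dividing a monomial of $B_w$. If (I1) fails, an even chord $f = \{v_i,v_j\}$ of $w$ splits $w$ into two shorter closed even walks by replacing one of the two arcs of $w$ between $v_i$ and $v_j$ with $f$; the primitive reduction of the associated binomial has both terms dividing $w^\pm$. If (I2) fails, a bridge allows an analogous decomposition at the two blocks it joins. If (I3) fails, two odd chords $f,f'$ crossing effectively can be combined to reroute the walk through both chords, producing a strictly shorter even closed walk whose binomial again divides $B_w$. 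If (I4) fails, two sinks at distance one in a cyclic block permit a local swap of adjacent edges' roles that strictly shortens the walk while preserving primitivity.

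For the $(\Leftarrow)$ direction, assume I1--I4 hold and suppose for contradiction that some primitive $B_{w'} \ne \pm B_w$ has a term dividing a term of $B_w$. Write the dividing term as $m'$ with $m' \mid w^+$, and let $w^+ = m' \ell$; since $\phi_G(w^+) = \phi_G(w^-)$ and $\phi_G(m') = \phi_G((w')^-)$, one deduces $w^- = (w')^- \ell$. This forces the edges of $w'$ (as a multiset) to sit inside those of $w$, with the bipartition of $(w')^\pm$ compatible with that of $w^\pm$ on the common edges. Classify $w'$ as P1, P2, or P3 via Proposition \ref{ohprimchar}, and track how each type embeds into the block structure of $w$. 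In every case the existence of such a nontrivial $w'$ forces exactly one of the configurations forbidden by I1--I4: the ``extra'' part of $w'$ not reused by $w$ becomes an even chord, a bridge, a pair of effectively crossing odd chords, or produces two sinks within distance one in a cyclic block of $w$.

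The main obstacle is the $(\Leftarrow)$ direction, and within it the P3 case, where the two vertex-disjoint cycles of $w'$ together with the connecting walks must be threaded through the blocks of $w$ while respecting parity. The case analysis is intricate because multiple edges, cut vertices, and cyclic blocks of $w$ interact, and one must rule out every pathological way $w'$ might fit inside $w$ by carefully tracking which edges are shared, with what multiplicity, and in which parity class. Once this bookkeeping is in place, each subcase directly produces a violation of one of I1--I4, contradicting our hypothesis and completing the proof.
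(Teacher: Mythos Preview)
The paper does not give its own proof of this proposition: it is stated with the attribution \cite[Thm.~4.14]{RTT12} and used as a black box. There is therefore nothing in the paper to compare your proposal against.

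That said, as a sketch toward the RTT12 argument your outline is broadly on the right track for the $(\Rightarrow)$ direction: each failure of I1--I4 does yield a shorter primitive walk whose binomial has a term dividing a term of $B_w$. For the $(\Leftarrow)$ direction, however, there is a genuine gap. From $(w')^+ \mid w^+$, say $w^+ = (w')^+ \cdot \ell$, you conclude $w^- = (w')^- \cdot \ell$. This does not follow: what you get is $\phi_G((w')^- \cdot \ell) = \phi_G(w^-)$, i.e.\ the two monomials lie in the same fiber, not that they are equal. In particular the edges of $w'$ need not sit inside those of $w$ at all; the ``extra'' edges of $w'$ can be edges of $G$ not used by $w$, which is precisely how chords and bridges arise. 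The actual argument in \cite{RTT12} works instead with an $\mathcal{F}_4$-decomposition of $w$ (a covering of $w^\pm$ by terms of smaller primitive walks satisfying I1--I4) and shows that whenever such a nontrivial decomposition exists, one of the covering walks forces an even chord, bridge, effective crossing, or a failure of strong primitivity in $w$. Your case analysis by the P1/P2/P3 type of $w'$ would need to be rebuilt on that footing.
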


\begin{lemma}\label{robustiffprimindisp}
  $I_G$ is robust if and only if all primitive elements are indispensable. 
\end{lemma}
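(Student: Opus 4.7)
The plan is to invoke Theorem \ref{urobustiffgrobust}, which identifies robustness of $I_G$ with the statement that $\mathcal{G}_G$ is a minimal generating set. Since every primitive binomial lies in $\mathcal{G}_G$ by definition, the lemma reduces to showing that $\mathcal{G}_G$ is a minimal generating set if and only if every element of $\mathcal{G}_G$ is indispensable.

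The ``if'' direction is essentially formal: if every $\pm B_w \in \mathcal{G}_G$ appears in every minimal generating set, then each such set contains $\mathcal{G}_G$; since $\mathcal{G}_G$ itself generates $I_G$, any minimal generating set must in fact equal $\mathcal{G}_G$, and in particular $\mathcal{G}_G$ is itself minimal. For the ``only if'' direction, I would take any minimal generating set $M$ of $I_G$ and establish $M = \mathcal{G}_G$ in two steps: first that $M \subseteq \mathcal{G}_G$ (up to sign), because every minimal binomial generator of a toric ideal is primitive; and second that $|M| = |\mathcal{G}_G|$, because $I_G$ is homogeneous (each edge maps to a degree-two monomial under $\phi_G$) so the minimal number of generators is well defined by graded Nakayama. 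Together with the assumed minimality of $\mathcal{G}_G$, these two facts force $M = \mathcal{G}_G$, hence $\pm B_w \in M$ for every primitive $w$.

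The step I expect to cite rather than reprove is the claim that every binomial in a minimal generating set of a toric ideal is primitive. The standard argument writes a non-primitive $B = m_1 - m_2 \in I_G$ in the form $B = r_1 B' + m_2'(r_1 - r_2)$, where $B' = m_1' - m_2' \in I_G$ is a primitive witness with $m_i = m_i' r_i$; primeness of $I_G$ together with $B, B' \in I_G$ forces $r_1 - r_2 \in I_G$, and induction on multi-degree then rules $B$ out of any minimal generating set. This is the only technical point in the proof; the rest is a repackaging of Theorem \ref{urobustiffgrobust} and the definition of indispensability.
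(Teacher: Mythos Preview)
Your proof is correct and follows essentially the same route as the paper: both invoke Theorem~\ref{urobustiffgrobust} and then argue via the inclusions $\mathcal{I}\subset\mathcal{U}\subset\mathcal{G}$ (the paper explicitly, you implicitly through the equivalence ``$\mathcal{G}_G$ minimal $\Leftrightarrow$ every primitive element indispensable''). Your version is more explicit about the two ingredients the paper's terse argument leaves to the reader---graded Nakayama for the cardinality of minimal generating sets, and the fact that every binomial in a minimal generating set of a toric ideal is primitive---so it reads as a fleshed-out form of the same proof.
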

\begin{proof}
  Since $\mathcal{U}$ is a generating set, we have the inclusions
  \begin{equation*}
    \mathcal{I} \subset \mathcal{U} \subset \mathcal{G},
  \end{equation*}
where $\mathcal{I}$ denotes the indispensable elements. Notice that by definition, $\mathcal{I}$ minimally generates an ideal.  Now if $I_G$ is robust, then by Theorem \ref{urobustiffgrobust}, we have equalities everywhere, since in particular our minimal generating set is unique by the uniqueness of $\mathcal{U}$ and so $\mathcal{I} = \mathcal{U}$. Conversely, if $\mathcal{G} = \mathcal{I}$, $\mathcal{U} = \mathcal{I}$ and so we have robustness.
\end{proof}

\begin{lemma}[{\cite[Cor.~3.3]{RTT12}}]\label{primcutvert}
  A cut vertex in the induced subgraph $W$ of a primitive walk $w$ separates the graph in two vertex-disjoint parts, the total number of edges of the cyclic blocks in each part is odd.
\end{lemma}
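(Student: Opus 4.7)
The plan is to analyze the walk $w$ locally at the cut vertex $v$ via its segments between successive visits to $v$. After cyclically shifting $w$ so that it starts and ends at $v$, let $B_L, B_R$ be the two blocks of $W$ containing $v$ (Proposition \ref{primitivebinomial}), and let $L, R$ be the components of $W \setminus \{v\}$ containing $B_L \setminus \{v\}$ and $B_R \setminus \{v\}$ respectively. Since $v$ is a cut vertex of $W$, every maximal subwalk of $w$ between consecutive visits to $v$---call such a subwalk a \emph{segment}---lies entirely on one side.

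The key counting step is to count walk-traversals of $B_L$-edges incident to $v$ (call them \emph{$B_L$-edge-uses at $v$}) in two ways. By Proposition \ref{primitivebinomial}, $B_L$ is a cycle or a cut edge, so the total count is exactly $2$ in either case: a cycle has two edges at $v$ each traversed once, while a cut edge has one edge at $v$ traversed twice. On the other hand, since segments do not revisit $v$ in their interior, each $B_L$-edge-use at $v$ is the first or last edge of some segment on side $L$, and each $L$-segment contributes exactly two such edge-uses (its first and last edge). Hence there is exactly one $L$-segment (and, symmetrically, exactly one $R$-segment). Next I invoke the sink condition: since $v$ is a sink of $B_L$, the two $B_L$-edge-uses at $v$ have the same parity in $w$; after the cyclic shift these occur at positions $1$ and $\ell_L$, where $\ell_L$ is the length of the unique $L$-segment, so $\ell_L$ is odd. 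Finally, this $L$-segment traverses every edge on side $L$---cyclic block edges once and cut edges twice by Proposition \ref{primitivebinomial}---so $\ell_L = c_L + 2 e_L$, where $c_L$ denotes the total number of edges of cyclic blocks on side $L$ and $e_L$ the number of cut edges on side $L$. Therefore $c_L \equiv \ell_L \equiv 1 \pmod 2$, and the same argument on the other side gives $c_R$ odd.

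The main obstacle I anticipate is the first counting step: one must verify carefully that every walk-traversal of a $B_L$-edge at $v$ really arises as the first or last edge of some $L$-segment (and not at some interior position), so that the tally equals $2 \cdot (\text{number of $L$-segments})$. This rests on the fact that, by definition, a segment is maximal between visits to $v$ and so does not revisit $v$ in the interior, together with the observation that an edge incident to $v$ on side $L$ must lie in $B_L$ (the only block of $W$ at $v$ whose non-$v$ vertices are in $L$). Once this accounting is in place, the sink condition from Proposition \ref{primitivebinomial} delivers the parity payoff immediately.
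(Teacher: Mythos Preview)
The paper does not supply a proof of this lemma; it is quoted from \cite[Cor.~3.3]{RTT12} and used as a black box in the subsequent arguments. So there is no in-paper proof to compare against, and your argument stands on its own.

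Your proof is correct. The decomposition of $w$ at $v$ into segments, the double count of $B_L$-edge-uses at $v$ (two from the block structure, two per $L$-segment from the segment endpoints) forcing a \emph{unique} $L$-segment, and the parity extraction from the sink condition are all sound. Two small points you might make explicit. First, ``induced subgraph $W$'' here means the subgraph whose edge set is exactly the edges traversed by $w$ (not the vertex-induced subgraph of $G$); you rely on this when asserting that the $L$-segment traverses every edge on side $L$, so that $\ell_L = c_L + 2e_L$. Second, the fact that $W \setminus \{v\}$ has \emph{exactly} two components (rather than more) follows from Proposition~\ref{primitivebinomial}(3), since a cut vertex lying in exactly two blocks separates the block--cut tree into exactly two pieces; you assume this without comment when defining $L$ and $R$.
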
 
\begin{lemma}\label{findpathlem}
  Suppose $v_0$ is a cut vertex in the induced subgraph $W$ of a primitive walk $w$. Then, there exist two simple paths $p_1,p_2$ from $v_0$ to two odd cyclic blocks $B_1,B_2$ of $W$ respectively, where $B_1,B_2$ are in the two different parts of $W$ in Lemma \ref{primcutvert}.
\end{lemma}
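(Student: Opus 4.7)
The plan is to decompose the problem using Lemma \ref{primcutvert}, extract an odd cyclic block from each piece via a parity argument, and then produce simple paths by exploiting the connectivity of each piece together with $v_0$.

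First I would invoke Lemma \ref{primcutvert} to write $W\setminus\{v_0\}$ as a disjoint union of two vertex-disjoint subgraphs $W_1,W_2$, each of which, together with $v_0$, is connected in $W$. The hypothesis of that lemma gives that the total number of edges belonging to cyclic blocks of $W_i$ is odd, for $i=1,2$.

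The key step is a parity observation. Since every even cyclic block contributes an even number of edges to the count, and since the total contribution in $W_i$ is odd, there must be an odd (hence positive) number of odd cyclic blocks among the blocks of $W_i$. Pick one such odd cyclic block in each part and call them $B_1\subset W_1$ and $B_2\subset W_2$. This furnishes the two odd cyclic blocks required by the lemma, lying in the two different parts of $W\setminus\{v_0\}$.

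Finally I would construct the simple paths. Because the induced subgraph $W$ is connected (being the edge-induced subgraph of a walk), and because removing $v_0$ disconnects $W$ into the two pieces $W_1,W_2$, each of $W_i\cup\{v_0\}$ is connected as a subgraph of $W$. Hence, for each $i$, there exists a path in $W_i\cup\{v_0\}$ from $v_0$ to any chosen vertex of $B_i$; trimming any repeated vertices yields a simple path $p_i$ from $v_0$ into $B_i$ that stays entirely within $W_i\cup\{v_0\}$. In particular $p_1$ and $p_2$ only share the vertex $v_0$. The main thing to be careful about is the parity step, since everything else is essentially a connectivity argument; once one sees that odd cyclic blocks are forced in both parts by the parity of the edge count in Lemma \ref{primcutvert}, the rest follows routinely.
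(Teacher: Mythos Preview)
Your proposal is correct and follows essentially the same approach as the paper: invoke Lemma \ref{primcutvert} to split $W$ at $v_0$, locate an odd cyclic block in each part, and use connectivity to produce simple paths. Your parity argument (even cyclic blocks contribute an even number of edges, so an odd total forces an odd cyclic block) is in fact more explicit than the paper's proof, which simply asserts the existence of odd cycles in each $W_i$ without spelling this out.
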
 
\begin{proof}
Let $w$ be a primitive walk with cut vertex $v_0$.  By Lemma \ref{primcutvert}, $v_0$ separates $W$ into two connected subgraphs $W_1,W_2$ both of which have at least one cyclic block. Further, $W_1,W_2$ are vertex disjoint except for $v_0$.  Now since the $W_i$ are connected, we can find paths joining $v_0$ to odd cycles in each $W_i$.  These can be chosen to be simple by omitting any ``loops''.
\end{proof}
\begin{lemma}\label{simplepathlem}   If $v_i,v_j$ are distinct vertices in a walk $w$ then there is a simple path $p$ contained in $w$ that connects $v_i,v_j$.
\end{lemma}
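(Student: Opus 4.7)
The plan is to extract a simple path from $w$ by iteratively excising loops from a subwalk that connects $v_i$ to $v_j$. First I would select a subwalk: since $v_i$ and $v_j$ both occur as vertices in $w$, pick any occurrence of $v_i$ and any later occurrence of $v_j$ in the vertex sequence of $w$ (if $v_j$ happens to occur before every occurrence of $v_i$, replace $w$ by $w^{-1}$). This yields a walk $q = (u_0, u_1, \ldots, u_m)$ with $u_0 = v_i$, $u_m = v_j$, all of whose edges are edges of $w$.

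Next I would argue by induction on the length $m$ of $q$. If the vertices $u_0, \ldots, u_m$ are pairwise distinct, then $q$ itself is a simple path and we are done. Otherwise, there exist indices $0 \le k < \ell \le m$ with $u_k = u_\ell$; the shortened sequence
\begin{equation*}
q' = (u_0, u_1, \ldots, u_k, u_{\ell+1}, \ldots, u_m)
\end{equation*}
is still a walk from $v_i$ to $v_j$ because $u_k = u_\ell$ guarantees that $\{u_k, u_{\ell+1}\} = \{u_\ell, u_{\ell+1}\}$ is an edge of $q$, hence of $w$. Every other consecutive pair in $q'$ is also a consecutive pair in $q$, so all edges of $q'$ are edges of $w$. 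Since $q'$ is strictly shorter than $q$, the inductive hypothesis produces the desired simple path.

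I expect no serious obstacle here; the only thing to be careful about is the meaning of ``contained in $w$'', which I take to mean that every edge of $p$ appears as an edge of $w$. The loop-excision procedure only ever uses edges that are already in $q$, and hence in $w$, so this condition is preserved throughout. The termination of the induction is immediate because the length of the walk strictly decreases at each step.
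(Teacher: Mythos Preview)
Your argument is correct and is essentially the same as the paper's: extract a subwalk of $w$ connecting $v_i$ to $v_j$, then iteratively excise loops until a simple path remains. You have simply made the ``removing all such loops'' step precise via induction on the length of the subwalk.
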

\begin{proof}
If $v_i, v_j$ are both vertices in $w$ then they are clearly connected via a subwalk of $w$.  If this subwalk is not a simple path, then this must be because the walk doubles back upon itself at some point.  Removing all such ``loops'' yields a simple path connecting $v_i, v_j$. 

\end{proof}

To determine questions about robustness, we can simplify the characterization of indispensible elements given above. To make this clear, we offer the following proposition.
\begin{prop}\label{dontneedstrongprimitivity}
  $I_G$ is robust if and only if all primitive elements satify conditions I1, I2, and I3 of Proposition \ref{indispensable}.
\end{prop}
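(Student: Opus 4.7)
The forward direction is immediate: if $I_G$ is robust, then by Lemma~\ref{robustiffprimindisp} every primitive walk is indispensable, hence satisfies I1, I2, I3 by Proposition~\ref{indispensable}. For the reverse direction I argue the contrapositive: assuming $I_G$ is not robust, I construct a primitive walk that violates one of I1, I2, I3.

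By Lemma~\ref{robustiffprimindisp}, failure of robustness yields a primitive walk $w$ that is not indispensable. If $w$ already violates I1, I2, or I3 we are done, so assume $w$ fails only condition I4. Then the induced subgraph $W$ has a cyclic block $B$ with two sinks $v_0, v_1$ at distance $1$, joined by an edge $e \in B$. Since edges of a closed even walk alternate in parity, a vertex $j\in B$ is a sink of $B$ if and only if the detour of $w$ at $j$ has odd length, which by Proposition~\ref{primitivebinomial}(3) happens if and only if $j$ is a cut vertex of $W$. Thus $v_0, v_1$ are adjacent cut vertices of $W$ lying in $B$.

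Next, applying Lemma~\ref{findpathlem} to $v_0$ produces a simple path $p_0 \subseteq W$ from $v_0$ to an odd cyclic block $C_0$ contained in the part $W_0$ of $W$ separated from $B$ by $v_0$; similarly obtain $p_1, C_1, W_1$ at $v_1$. A block-cut-tree analysis of $W$ shows that $W_0$ and $W_1$ are vertex-disjoint and meet $B$ only at $v_0$ and $v_1$, respectively, so $p_0, p_1, C_0, C_1$ are otherwise pairwise disjoint. Letting $P$ denote the simple path $B \setminus \{e\}$ from $v_0$ to $v_1$, define $w^{**}$ to be the closed walk that traverses $C_0$ starting and ending at $u_0 = p_0 \cap C_0$, then goes along $p_0, P, p_1$ to $u_1 \in C_1$, traverses $C_1$, and finally retraces $p_1^{-1}, P^{-1}, p_0^{-1}$ back to $u_0$.

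A direct check against Proposition~\ref{primitivebinomial} confirms that $w^{**}$ is primitive of P3 type: its only cyclic blocks are $C_0, C_1$; its only multiple edges are those of $p_0, P, p_1$, each a cut edge traversed exactly twice; and at each of $u_0, u_1$ the two incident edges of $C_i$ occupy positions $1$ and $|C_i|$ in the walk, both odd since $|C_i|$ is odd, so $u_i$ is a sink of $C_i$. Now $e = \{v_0, v_1\} \in E(G)$ is not in $W^{**}$ (since $W^{**}$ meets $B$ only along $P$), while $v_0, v_1 \in W^{**}$, so $e$ is a chord of $w^{**}$. Since $v_0, v_1$ lie in distinct cut-edge blocks of $P$ within $W^{**}$, the chord $e$ is a bridge, and therefore $w^{**}$ violates I2. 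The main difficulty is the careful primitivity verification for $w^{**}$---especially the sink condition at $u_0, u_1$, which relies on the odd lengths of $C_0, C_1$---together with handling small or degenerate cases (for instance when $B$ is a triangle or when $p_0, p_1$ consist of a single edge).
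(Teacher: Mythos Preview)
Your proof is correct and follows essentially the same route as the paper's: given a primitive walk failing only I4, both you and the paper use the two adjacent sinks $v_0,v_1$ in a cyclic block $B$, invoke Lemma~\ref{findpathlem} at each to find odd cycles $C_0,C_1$ with simple paths $p_0,p_1$, and assemble these with the arc $P=B\setminus\{e\}$ into a new primitive walk for which $e$ is a bridge, violating I2. The only substantive difference is that the paper simply observes this new walk is a C2 or C3 circuit---hence automatically primitive by the containment $\mathcal{C}_G\subset\mathcal{G}_G$---whereas you verify primitivity directly against Proposition~\ref{primitivebinomial}; your verification is correct but unnecessary once you recognize $w^{**}$ as a circuit.
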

\begin{proof}
If $I_G$ is robust, then all primitive elements are indispensible by Lemma \ref{robustiffprimindisp}, so they satisfy conditions I1 through I4. In particular, they satisfy the first three. Now suppose that all primitive elements of $I_G$ satisfy conditions I1, I2, and I3, but suppose for the sake of a contradiction that there exists a primitive walk $w$ that does not satisfy I4, that is, it is not strongly primitive.

The walk $w$ must be of type P1, P2, or P3.  

P1: An even cycle is biconnected and its underlying graph is $2$-regular thus no sinks exist so $w$ is strongly primitive.  

P2: For two odd cycles joined at a cut vertex, the cut vertex is the unique sink, so $w$ is strongly primitive. 

P3: Suppose $w$ consists of two odd cycles joined along paths.  Let $e$ denote an edge connecting two sinks $s_1, s_2$ in the same cyclic block $B$. Applying Lemma \ref{findpathlem} to $s_1$ on the part of the graph not containing $B$ we get from Lemma \ref{primcutvert}, we get an odd cycle $c_1$ and a simple path $p_1$ connecting $c_1$ to $s_1$. By the same argument, we get an odd cycle $c_2$ and a simple path $p_2$ connecting $c_2$ to $s_2$. Now let $q$ denote the path in $B$ connecting $s_1$ and $s_2$ that does not contain $e$. Then, $e$ forms a bridge of the C3 circuit $c := (c_1,p_1,q,p_2^{-1},c_2,p_2,q^{-1},p_1^{-1}),$ contradicting I2.
\end{proof}
We are now ready to state our classification of robust graph ideals.
\begin{thm}\label{charcircuits}
  $I_G$ is robust if and only if the following conditions are satisfied.
  \begin{description}\itemsep0pt\parskip0pt\parsep0pt
    \item[R1] No circuit of $G$ has an even chord,
    \item[R2] No circuit of $G$ has a bridge,
    \item[R3] No circuit of $G$ contains an effective crossing, and
    \item[R4] No circuit of $G$ shares exactly one edge (and no other vertices) with another circuit such that the shared edge is part of a cyclic block in both circuits.
  \end{description}
\end{thm}

In particular, this implies that questions of robustness can be answered by looking at the circuits as they lie on the graph.

\begin{proof}
By Proposition \ref{dontneedstrongprimitivity}, it suffices to show that all primitive walks satisfy conditions I1, I2, and I3 if and only if $G$ satisfies conditions R1 through R4. To do this, we'll show the contrapositive statement, which is that one of R1 through R4 is not satisfied if and only if there exists a primitive walk that doesn't satisfy one of I1 though I3. We begin with the forward direction of this new statement.
\vspace{1em}
\par \underline{$\Rightarrow : \neg$ R1, R2, R3} Suppose that one of R1 through R3 is false. Then, there exists a circuit $w$ with either an even chord, a bridge, or an effective crossing. But all circuits are primitive, so $w$ is a primitive walk that doesn't satisfy one of I1 through I3.
\par \underline{$\neg$ R4} Suppose that R4 is false, then there are two circuits $c$ and $c'$ that share exactly one edge $e$ (and no other vertices), where $e$ belongs to a cyclic block of both $c$ and $c'$. First consider the case where $c$ and $c'$ are C1 circuits. Then write $c = (e,w)$ where $w$ is an odd simple walk connecting the vertices of $e$, and similarly put $c' = (e,w')$. Since $c$ and $c'$ share no vertices other than the two of $e$, the walks $w$ and $w'$ share only these two vertices as well. Consider the new walk $u = (w',w^{-1})$ that starts and ends at one vertex of $e$. The walk $u$ is an even cycle since $w$ and $w'$ are both odd only share the 2 vertices that connect them. However, $e$ is an even chord of $u$, by construction. Therefore, $u$ is primitive and doesn't satisfy I1.
\par Now consider the case where $c = (e,w)$ is a C1 circuit and $c' = (c'_1, w', c'_2, w'^{-1})$ is a circuit of type C2 or C3. (In the case where it is C2, $w'$ is the empty walk). We can suppose without loss of generality that the edge $e$ occurs in $c'_2$, so put $c'_2 = (u'_1, e, u'_2)$, and denote by $v'$ the vertex that connects $c'_2$ to $w'$. Consider the new walk $u = (c'_1, w', u'_1, w, u'_2, w'^{-1})$. It is a C3 (or C2, if $w'$ is empty) walk, by the disjointness condition on $c$ and $c'$. If both $u'_1$ and $u'_2$ have positive length, then $e$ is an even chord of $u$ connecting two vertices in $c'_2$, neither of which are $e$, by construction. Thus, $u$ doesn't satisfy condition I1. If one of $u'_1$, $u'_2$ is empty (they can't both be empty since $c'_2$ is not a loop), then $e$ is a chord connecting $v'$ to another vertex in $c'_2$, so that $e$ is a bridge of $u$. In this case, $u$ doesn't satisfy condition I2.
\par Finally we can consider the case where $c = (c_1, w, c_2, w^{-1})$ and $c' = (c'_1, w', c'_2, w'^{-1})$ are both C2 or C3 circuits. Without loss of generality, suppose that $e$ is contained in both $c_2$ and $c'_2$. As above, write $c_2 = (u_1, e, u_2)$ and $c'_2 = (u'_1, e, u'_2)$. We can pick the order of $u'_1$ and $u'_2$ so that the last vertex of $u_1$ is the first vertex of $u'_2$. Now consider the closed even walk $u = (c_1, w, u_1, u'_2, w', c'_1, w'^{-1}, u'_1, u_2, w^{-1})$. The walk $u$ is primitive of type P3 by the disjointness condition on $c$ and $c'$. Let $v$ be the vertex connecting $c_2$ to $w$, and define $v'$ similarly for $c'$. If none of $u_1, u_2, u'_1, u'_2$ are empty, then $e$ is a chord of $u$ connecting two vertices in the cyclic block $(u_1, u'_2, u'_1, u_2)$, neither of which are $v$ or $v'$. In this case, $e$ is an even chord by construction, so that $u$ doesn't satisfy I1. Alternatively, if one of these four walks in empty, then at least one vertex of $e$ will be $v$ or $v$, so that $e$ is a bridge of $u$, so that $u$ doesn't satisfy I2.
\vspace{1em}
Now we have shown that if one of R1 through R4 is false, then there exists a primitive walk that doesn't satisfy at least one of I1 through I3. 
\par \underline{$\Leftarrow :$} For the other direction, let $w$ be a primitive walk that doesn't satisfy at least one of conditions I1, I2, and I3. First suppose $w$ is of type P1 or P2. Since P1 and P2 primitive walks are exactly C1 and C2 circuits, and since $w$ has either an even chord, a bridge, or an effective crossing, we have shown that one of R1 through R3 must be false for our graph $G$.
\vspace{1em}
\par \underline{$\neg$ I1} Now suppose $w$ is a primitive walk of type $P3$. First we consider the case where $w$ doesn't satisfy the condition I1, so that $w$ has an even chord $f$ in one of its cyclic blocks $B$. First, suppose that $B$ is odd. If we look at $B = (e_1, \ldots, e_n)$ as an odd cycle, then $f$ splits $B$ into one side with an odd number of edges $(e_1,e_2,\ldots,e_k)$ where $k$ is odd. Denote by $c'$ the even cyclic block $(e_1,\ldots,e_k,f)$, and denote by $c_1$ the odd cyclic block $(e_{k+1},\ldots,e_n,f)$. Since $f$ is an even chord, $c_1$ is a cyclic block of some even walk. This means that one of the vertices of $c_1$ that is not a vertex of $f$ must be the start and end of some odd path that is vertex-disjoint from the rest of the the subgraph induced by $w \setminus B.$ 
Because $w$ is primitive, this vertex $v$ must be a cut vertex of $w$. Applying Lemma \ref{findpathlem} to the connected component of $w\setminus{v}$ that does not contain $f$, we get a simple path $p$, potentially empty, from $c_1$ to an odd cycle $c_2$. Then, $c = (c_1, p, c_2, p^{-1})$ is a C3 (or C2 if $p$ is empty) circuit. By construction, $c$ and $c'$ are two circuits that share exactly one edge (and no other vertices) contained in a cyclic block of both of them. This contradicts R4.
\par The second subcase is when $B$, the cyclic block containing the chord $f$, is even. If $f$ is an even chord of $B$ when $B = (e_1, \ldots, e_n)$ is considered as a closed even cycle, then we have shown that R1 is not true, with $B$ being the offending circuit. Suppose that $f$ is an odd chord of $B$ when considered as a closed even cycle. Then $f$ divides $B$ into two odd cycles $c_1 = (e_1,\ldots,e_k,f)$ and $c'_1 = (e_{k+1},\ldots,e_n,f)$. As above, one of the vertices $v$ of $c_1$ that is not a vertex of $f$ must be the start and end of some odd path. Again, we apply Lemma \ref{findpathlem} to the part of $w$ that doesn't contain $f$, using $v$ as the cut vertex to get a simple path $p$ and odd cycle $c_2$ that make $c = (c_1, p, c_2, p^{-1})$ into a C3 or C2 circuit. Symmetrically, we can do the same for $c'_1$ to get another C3 or C2 circuit $c' = c'_1, p', c'_2, p'^{-1})$. By construction, $c$ and $c'$ share exactly one edge $f$ and no other vertices, and $f$ is contained in a cyclic block of both. This shows that R4 is false for our graph.
\vspace{1em}
\par \underline{$\neg$ I2}
Suppose $w$ is a P3 walk with induced subgraph $W$ which has a bridge, so that it does not satisfy I2. Then, $w$ has a bridge $f$ connecting two vertices $v_i,v_j$ that lie in different blocks $B_1$ and $B_2$, which share at most one vertex by the definition of block, for otherwise $B_1 \cup B_2$ would be a larger biconnected subgraph of $W$, contradicting the definition of block. Note that by Proposition \ref{primitivebinomial}, $v_i$ is either a cut vertex, or is a non-cut vertex in a cycle---in the latter case, the cycle can be even or odd; the same applies to $v_j$. We claim there is a C2 or C3 circuit such that $f$ is a bridge. Our strategy is to connect $v_i,v_j$ with a simple path $p_0$, and then find odd cycles on both ends of $p$.
\par We first find a simple path $p_0$ between $v_i,v_j$. Let $\mathcal{P}$ be the set of simple paths between $v_i,v_j$ that are contained in $W$; it is non-empty by Lemma \ref{simplepathlem}. We then let $p_0$ be the simple path with the minimal number of cut vertices of $W$ (not necessarily unique), which exists since the number of cut vertices for any walk in $W$ is a well-defined natural number.
\par Suppose $v_i$ is a cut vertex. Then, applying Lemma \ref{findpathlem} to the connected component of $W$ not containing $v_j$ gives a path $p_1$ from $v_i$ to some odd cycle $c_1$. Thus, the path $p = (p_0,p_1)$ goes from $v_j$, through $v_i$, and ends at $c_1$.
\par Suppose $v_i$ is a non-cut vertex contained in an even cycle $B_1$. We claim that $B_1$ contains a cut vertex of $W$ that is not in $p_0$. Suppose $B_1$ contains the cut vertices $x_1,x_2,\ldots,x_n$, appearing in that order such that tracing the path around $B_1$ goes through $x_1$, then $x_2$, etc., until it goes through $x_n$, and then back to $x_1$, and such that $v_i$ appears between $x_n$ and $x_1$. Now if $p_0$ goes through every cut vertex, then after possible relabeling $p_0$ goes through $x_1$, $x_2$, etc., until it goes through $x_n$ and then out of $B_1$. But then, the path connecting $v_i$ to $x_n$ and then out of $B_1$ is a simple path connecting $v_i,v_j$ with fewer cut vertices, contradicting the construction of $p$. Thus, letting $x_0$ denote the cut vertex in $B_1$ not contained in $p$, there exists a simple path $p_1$ connecting $x_0$ to $v_i$ contained in $B_1$, for otherwise $p$ would not be simple. Now applying Lemma \ref{findpathlem} at $x_0$ as in the case when $v_i$ is a cut vertex gives a path $p_2$ from $x_0$ to some odd cycle $c_1$, and the path $p = (p_0,p_1,p_2)$ goes from $v_j$, through $v_i$, and ends at $c_1$.
\par Now suppose $v_i$ is a non-cut vertex contained in an odd cycle $B_1$. We claim we can find a subpath $p$ of $p_0$ such that it only intersects $B_1$ once. Orienting $p_0$ such that it starts at $v_i$, we can find the last vertex $v_1$ such that $v_1 \in B_1$. Letting $p_1$ be the subpath of $p$ starting at $v_1$, we have the subpath desired. Letting $p_2$ be the simple path from $v_1$ to $v_i$ fully contained in $B_1$, we see that the path $p = (p_1,p_2)$ goes from $v_j$ to an odd cycle $B_1$ which contains $v_i$.
\par Repeating the same process at $v_j$, we see that we can thereby construct a C2 or C3 circuit such that $f$ is a bridge between $v_i,v_j$.
\vspace{1em}
\par \underline{$\neg$ I3} Finally, suppose $w$ is a P3 walk that violates I3, that is, $w$ has an effective crossing in some cyclic block $B = (e_1,\ldots,e_n)$ of $w$. Let $f = (v_1,v_3)$ and $f' = (v_2,v_4)$ be the two odd chords of $w$ that cross strongly effectively in $w$. These chords divide $B$ up into 4 segments of edges, $s_1 = (e_1,\ldots,e_k), s_2 = (e_{k+1},\ldots,e_{l}), s_3 = (e_{l+1},\ldots,e_{m}), s_4 = (e_{m+1},\ldots,e_n)$, where the vertex between $s_j$ and $s_{j+1}$ is $v_{j+1}$ and the vertex between $s_4$ and $s_1$ is $v_1$. For each $i$, if $s_i$ has an even number of edges, then at least one vertex $v$ in $s_i$ that is not one of the $v_i$ must be the start and end of an odd path that is vertex disjoint from the rest of $w$. This is the case because $f$ and $f'$ cross strongly effectively, so there must be an odd number of edges along $w$ between where one starts and the other ends.
\par If $B$ is an odd cyclic block, then the sum of the lengths of the $s_i$ is odd, so either three are odd and one is even or three of them are even and one is odd.  In the former case, suppose that $s_1$ is the even one and let $v$ be a vertex as described above. By Lemma \ref{findpathlem}, there is a path $p$ and an odd cycle $c$ contained in the part of $w$ that does not contain $B$. But then, $(B,p,c,p^{-1})$ is a closed even walk that is either a C2 or C3 circuit, and $f$ and $f'$ cross effectively in this walk, contradicting R3. In the latter case, suppose without loss of generality that $s_1, s_2, s_3$ are even and $s_4$ is odd. Let $v$ and $v'$ be vertices in $s_1$ and $s_2$, respectively, that are not one of the $v_i$, that are the starting and ending points of odd walks in the manner described above. As before, in each case, we can find paths $p$ and $p'$ connecting $v$ and $v'$ to odd cyclic blocks $c$ and $c'$. Let $q$ be the walk that goes along $B$ connecting $v$ to $v'$ that goes through $v_1$, $v_4$, and $v_3$. Then the path $(c,p^{-1},q,p',c',p'^{-1},q^{-1},p)$ is a C3 circuit that has $f$ as a bridge, contradicting condition R2.
\par Finally, consider the case where $B$ is an even cyclic block. The sum of the lengths of the $s_i$ is even, so they are either all odd, all even, or two are even and two are odd. If each of the $s_i$ is odd, then $B$ is a closed even cycle with an effective crossing, negating condition R3. If two are even and two are odd, then without loss of generality, we can say that $s_1$ is even and $s_2$ is odd. But then, the chord $f$ is an even chord of $B$ when considered as an even cyclic walk, contradicting R1. If all four are even, then we let $v$ and $v'$ be the vertices in $s_1$ and $s_2$, respectively, that are the start and end of a closed even walk. Proceeding as in the above case, we find a C3 circuit of the form $(c,p^{-1},q,p',c',p'^{-1},q^{-1},p)$ that has $f$ as a bridge, negating condition R2.
\end{proof}

\begin{remark}
The previous theorems suggest that questions of robustness for a graph $G$, which is naturally a question about the Universal Gr\"obner Basis, can be reduced to a question about the Graver Basis, and in turn, to one about the circuits of $G$. In light of this, it is a natural question to ask if it is true that $G$ is robust (that is, if the primitive walks of $G$ are precisely the indispensable walks of $G$) if and only if the circuits of $G$ are precisely the indispensable walks of $G$. It turns out that both directions are false, as demonstrated by the following graphs.
\begin{center}
\begin{tikzpicture}[line cap=round,line join=round,x=2cm,y=2cm]
    \coordinate (1) at (-2,1);
    \coordinate (2) at (-2,-1);
    \coordinate (3) at (-1,0);
    \coordinate (4) at (-.75,.75);
    \coordinate (5) at (-.75,-.75);
    \coordinate (6) at (0,1);
    \coordinate (7) at (0,-1);
    \coordinate (8) at (.75,.75);
    \coordinate (9) at (.75,-.75);
    \coordinate (10) at (1,0);
    \coordinate (11) at (2,1);
    \coordinate (12) at (2,-1);

    \draw (1) edge node [anchor=east]{$e_1$} (2);
    \draw (1) edge node [anchor=south west]{$e_2$} (3);
    \draw (2) edge node [anchor=north west]{$e_3$} (3);
    \draw (3) edge node [anchor=west]{$e_4$} (4);
    \draw (3) edge node [anchor=west]{$e_5$} (5);
    \draw (4) edge node [anchor=south]{$e_6$} (6);
    \draw (5) edge node [anchor=south]{$e_7$} (7);
    \draw (6) edge node [anchor=west]{$e_8$} (7);
    \draw (6) edge node [anchor=south]{$e_9$} (8);
    \draw (7) edge node [anchor=south]{$e_{10}$} (9);
    \draw (8) edge node [anchor=east]{$e_{11}$} (10);
    \draw (9) edge node [anchor=east]{$e_{12}$} (10);
    \draw (10) edge node [anchor=south east]{$e_{13}$} (11);
    \draw (10) edge node [anchor=north east]{$e_{14}$} (12);
    \draw (11) edge node [anchor=west]{$e_{15}$} (12);

    \draw [fill=black] (1) circle (0.03);
    \draw [fill=black] (2) circle (0.03);
    \draw [fill=black] (3) circle (0.03);
    \draw [fill=black] (4) circle (0.03);
    \draw [fill=black] (5) circle (0.03);
    \draw [fill=black] (6) circle (0.03);
    \draw [fill=black] (7) circle (0.03);
    \draw [fill=black] (8) circle (0.03);
    \draw [fill=black] (9) circle (0.03);
    \draw [fill=black] (10) circle (0.03);
    \draw [fill=black] (11) circle (0.03);
    \draw [fill=black] (12) circle (0.03);
\end{tikzpicture}
\end{center}
In this graph, the Graver basis consists of the following binomials:
\begin{align*}
  B_1 &= e_4e_7e_8e_{12}^2e_{15} - e_5e_6e_{10}^2e_{13}e_{14}, & B_7 &= e_1e_4^2e_8e_9e_{12} - e_2e_3e_6^2e_{10}e_{11},\\
  B_2 &= e_4e_7e_9e_{12} - e_5e_6e_{10}e_{11}, & B_8 &= e_1e_4^2e_9^2e_{13}e_{14} - e_2e_3e_6^2e_{11}^2e_{15},\\
  B_3 &= e_1e_5^2e_{10}^2e_{13}e_{14} - e_2e_3e_7^2e_{12}^2e_{15}, & B_9 &= e_1e_5^2e_8e_{10}e_{11} - e_2e_3e_7^2e_9e_{12},\\
  B_4 &= e_8e_{11}e_{12}e_{15} - e_9e_{10}e_{13}e_{14}, & B_{10} &= e_4e_7e_9^2e_{13}e_{14} - e_5e_6e_8e_{11}^2e_{15},\\
  B_5 &= e_1e_4e_5e_8 - e_2e_3e_6e_7, & B_{11} &= e_1e_5^2e_8^2e_{11}^2e_{15} - e_2e_3e_7^2e_9^2e_{13}e_{14},\\
  B_6 &= e_1e_4^2e_8^2e_{12}^2e_{15} - e_2e_3e_6^2e_{10}^2e_{13}e_{14}, & B_{12} &= e_1e_4e_5e_9e_{10}e_{13}e_{14} - e_2e_3e_6e_7e_{11}e_{12}e_{15}.
\end{align*}
\indent The circuits are precisely the first 11 binomials, as are the indispensible binomials. The binomial $B_{12}$ is primitive and an element of the Universal Gr\"obner Basis, but not indispensible. This is a counterexample to the backwards direction of the proposed statement above, since the set of circuits is the same as the set of indispensible walks, but the graph is not robust. This also gives an example of a graph that has a unique minimal generating set, but is not robust. To demonstrate a counterexample to the forward direction of the proposed statment, we offer the following robust graph.
\begin{center}
\begin{tikzpicture}[line cap=round,line join=round,x=2cm,y=2cm]
    \coordinate (1) at (-2,1);
    \coordinate (2) at (-2,-1);
    \coordinate (3) at (-1,0);
    \coordinate (6) at (0,1);
    \coordinate (7) at (0,-1);
    \coordinate (10) at (1,0);
    \coordinate (11) at (2,1);
    \coordinate (12) at (2,-1);

    \draw (1) edge node [anchor=east]{$e_1$} (2);
    \draw (1) edge node [anchor=south west]{$e_2$} (3);
    \draw (2) edge node [anchor=north west]{$e_3$} (3);
    \draw (3) edge node [anchor=south east]{$e_4$} (6);
    \draw (3) edge node [anchor=north east]{$e_5$} (7);
    \draw (6) edge node [anchor=south west]{$e_6$} (10);
    \draw (7) edge node [anchor=north west]{$e_7$} (10);
    \draw (10) edge node [anchor=south east]{$e_8$} (11);
    \draw (10) edge node [anchor=north east]{$e_9$} (12);
    \draw (11) edge node [anchor=west]{$e_{10}$} (12);

    \draw [fill=black] (1) circle (0.03);
    \draw [fill=black] (2) circle (0.03);
    \draw [fill=black] (3) circle (0.03);
    \draw [fill=black] (6) circle (0.03);
    \draw [fill=black] (7) circle (0.03);
    \draw [fill=black] (10) circle (0.03);
    \draw [fill=black] (11) circle (0.03);
    \draw [fill=black] (12) circle (0.03);
\end{tikzpicture}
\end{center}
In this graph, the Graver basis consists of the following binomials:
\begin{align*}
  B_1 &= e_4e_7 - e_5e_6, & B_3 &= e_1e_5^2e_8e_9 - e_2e_3e_7^2e_{10},\\
  B_2 &= e_1e_4^2e_8e_9 - e_2e_3e_6^2e_{10}, & B_4 &= e_1e_4e_5e_8e_9 - e_2e_3e_6e_7e_{10}.
\end{align*}
\indent All four of these are elements of the Universal Gr\"obner Basis, and they are precisely the indispensible binomials of this graph. However, $B_4$ is not a circuit. In this case, the graph is robust, but there is a noncircuit indispensible binomial.
\end{remark}


\section{Applications to low-degree graph ideals}
The characterization given above takes on a particularly simple form in the case where the ideal is generated by quadratic binomials.
\begin{prop}
If $G$ is a simple graph and $I_G$ is robust and minimally generated by quadratics, then any two circuits of $G$ either share no edges or exactly two edges of opposite parity.
\end{prop}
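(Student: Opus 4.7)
The plan is first to reduce the statement to one about $4$-cycles, and then to analyze how two $4$-cycles can intersect by a case split. Since $I_G$ is robust, Theorem \ref{urobustiffgrobust} gives that the Graver basis $\mathcal{G}_G$ is a minimal generating set, and combined with the quadratic hypothesis this forces every primitive walk of $G$ to have length exactly $4$. By Proposition \ref{ohprimchar}, a P2 primitive walk has length at least $3+3=6$ and a P3 primitive walk has length at least $3+3+2=8$, so every primitive walk, and in particular every circuit, must be a P1/C1 walk of length $4$---that is, a $4$-cycle of $G$. Note also that in the walk $(e_1,e_2,e_3,e_4)$ of a $4$-cycle, adjacent edges have opposite parity while opposite edges have the same parity.

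Now fix two distinct $4$-cycles $c$ and $c'$ of $G$ sharing at least one edge; the goal is to show they share exactly two edges of opposite parity. Since $G$ is simple, three or more shared edges already determine the fourth, forcing $c=c'$, so only $1$ or $2$ shared edges are possible. The case of two shared edges splits further: if the shared edges are adjacent in $c$ (sharing a common endpoint), they have opposite parity, and this is the desired conclusion. If instead they are opposite in $c$, then writing $c=(v_1,v_2,v_3,v_4)$ with shared edges $(v_1,v_2),(v_3,v_4)$, the remaining two edges of $c'$ are forced to be the two diagonals $(v_1,v_3),(v_2,v_4)$ of $c$; these diagonals are then odd chords of $c$ that cross effectively, violating R3.

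It remains to rule out the case of exactly one shared edge $e$. If $c$ and $c'$ share no vertex other than the endpoints of $e$, then R4 is violated directly, since for any $4$-cycle the entire cycle is its cyclic block. Otherwise they share exactly one additional vertex (sharing two would force a second shared edge). Writing $c=(v_1,v_2,v_3,v_4)$ with $e=(v_1,v_2)$, a short case analysis shows that up to symmetry $c'=(v_1,v_2,u,v_3)$ for some $u\notin\{v_1,v_2,v_3,v_4\}$, so that $(v_1,v_3)$ is an edge of $G$ and hence a chord of $c$. Then $G$ contains the triangle $T_1=(v_1,v_3,v_4)$ (built from this chord together with two edges of $c$) and the triangle $T_2=(v_2,u,v_3)$ (built from two edges of $c'$ together with the edge $(v_2,v_3)$ of $c$), and these two triangles share only the vertex $v_3$. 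This produces a C2 circuit of length $6$, contradicting quadratic generation. The main obstacle is precisely this subcase, where one must carefully extract a C2 circuit from the chord forced by $c'$.
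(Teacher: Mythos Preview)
Your proof is correct and follows the same overall architecture as the paper's: reduce to $4$-cycles via Theorem \ref{urobustiffgrobust}, then case-split on the number of shared edges. The two-shared-edges-of-same-parity case lands on the same effective-crossing obstruction in both proofs.

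Where you genuinely diverge is the one-shared-edge case. The paper simply writes $c_1=(v_1,v_2,v_3,v_4)$ and $c_2=(w_1,v_2,v_3,w_4)$ and observes that $c_3=(v_1,v_2,w_1,w_4,v_3,v_4)$ is a hexagon giving a cubic generator; it does not explicitly address whether $w_1,w_4$ could coincide with $v_1$ or $v_4$. You instead split on whether $c$ and $c'$ share an additional vertex: when they do not, you invoke condition R4 of Theorem \ref{charcircuits}; when they do, you extract the chord $(v_1,v_3)$ and build a C2 circuit from two explicit triangles. This is more careful than the paper's argument on the degenerate subcase, at the cost of routing through the heavier Theorem \ref{charcircuits} rather than exhibiting a cubic generator directly. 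Either route works; yours trades directness for explicit case coverage.
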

\begin{proof}
  It suffices to restrict the characterization in Theorem \ref{charcircuits} to this special case. By Theorem \ref{urobustiffgrobust}, we know that all primitive elements are minimal generators; in particular, all the circuits of $G$ are even cycles of length $4$, which we will call ``squares''.
  \par Now suppose that there exist two squares of $G$ that share one or three edges. If they share three edges, then $G$ would not be simple, contradicting our assumptions on $G$. Next, suppose they share one edge. Then, our squares are as follows
  \begin{equation*}
    \begin{CD}
      c_1\colon v_1 @>>> v_2 @>f>> v_3 @>>> v_4 @>>> v_1\\
      c_2\colon w_1 @>>> v_2 @>f>> v_3 @>>> w_4 @>>> w_1
    \end{CD}
  \end{equation*}
  where $f$ is the shared edge, and the larger circuit
  \begin{equation*}
    \begin{CD}
      c_3\colon v_1 @>>> v_2 @>>> w_1 @>>> w_4 @>>> v_3 @>>> v_4 @>>> v_1
    \end{CD}
  \end{equation*}
  would produce a cubic generator, a contradiction.
  \par Now consider when two squares share two edges. If they share two edges of the same parity, then all $4$ of their vertices are the same, and they must be of one the following forms:
  \begin{equation*}
    \begin{CD}
      c_1\colon v_1 @>f>> v_2 @>>> v_3 @>f'>> v_4 @>>> v_1\\
      c_2\colon v_1 @>f>> v_2 @>>> v_3 @>f'>> v_4 @>>> v_1\\
    \end{CD}
  \end{equation*}
  or
  \begin{equation*}
    \begin{CD}
      c_1\colon v_1 @>f>> v_2 @>>> v_3 @>f'>> v_4 @>>> v_1\\
      c_2\colon v_1 @>f>> v_2 @>>> v_4 @>f'>> v_3 @>>> v_1\\
    \end{CD}
  \end{equation*}
  where $f$ and $f'$ are the shared edges. In the first case, our graph is not simple because there are two distinct edges connecting $v_2$ and $v_3$, and in the second case, the circuit $c_1$ has an effective crossing.
\end{proof}

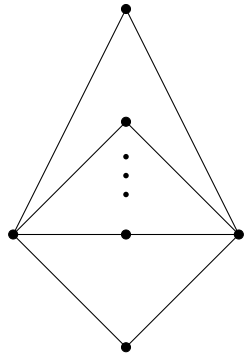
\begin{figure}[ht]
  \centering
    \begin{tikzpicture}[line cap=round,line join=round,x=1.5cm,y=1.5cm]
      \coordinate (l) at (1,0);
      \coordinate (c) at (0,0);
      \coordinate (r) at (-1,0);
      \coordinate (x1) at (0,2);
      \coordinate (x2) at (0,1);
      \coordinate (x3) at (0,-1);

      \draw (0,0.65) node {\scalebox{1.8}{$\vdots$}};

      \draw (l) -- (x1) -- (r);
      \draw (l) -- (x2) -- (r);
      \draw (l) -- (c) -- (r);
      \draw (l) -- (x3) -- (r);
      \draw[fill=black] (x1) circle (0.04);
      \draw[fill=black] (x2) circle (0.04);
      \draw[fill=black] (x3) circle (0.04);
      \draw[fill=black] (l) circle (0.04);
      \draw[fill=black] (c) circle (0.04);
      \draw[fill=black] (r) circle (0.04);
    \end{tikzpicture}
  \caption{All irreducible robust graphs with ideals generated by quadratics, up to isomorphism of graph ideals.}
  \label{fig:tents}
\end{figure}
\par Now recall the following result from \cite{BR}.
\begin{thm}[{\cite[Thm.~2.2]{BR}}]\label{quadchar}
  Let $F$ be an irreducible robust set consisting of irreducible quadratic binomials. Then, $F$ is robust if and only if $\lvert F\rvert = 1$ or $F$ consists of $2 \times 2$ minors of a generic $2 \times n$ matrix
  \begin{equation*}
    \begin{pmatrix}
      x_1 & \cdots & x_n\\
      y_1 & \cdots & y_n
    \end{pmatrix}
  \end{equation*}
  up to rescaling of variables.
\end{thm}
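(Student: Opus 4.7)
The plan is to prove the two directions separately, with the backward direction being a concrete verification and the forward direction being a combinatorial deduction from the non-dividing property.

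For the backward direction, the case $|F|=1$ is immediate: a single irreducible binomial generates a principal prime ideal whose universal Gr\"obner basis is just the binomial itself. For the $2\times 2$ minors of a generic $2\times n$ matrix, the associated toric ideal is the homogeneous coordinate ring of the Segre embedding $\mathbb{P}^1\times\mathbb{P}^{n-1}$, for which a direct calculation (or a standard reference) shows that the Graver basis, universal Gr\"obner basis, and any minimal generating set all coincide with the set of $2\times 2$ minors. In particular, each minor $x_iy_j - x_jy_i$ is indispensable because no term of any such minor divides a term of another.

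For the forward direction, suppose $F$ is an irreducible robust set with $|F|\ge 2$. The crucial first step is to extract the non-dividing condition: inspecting the proof of Lemma \ref{ugbnondividing}, one sees that nothing in that argument was specific to graph ideals, so it applies to any toric ideal. Thus no term of any $f\in F$ divides a term of any $g\in F$ with $g\neq f$. I would then associate to $F$ a graph $H$ whose vertices are the variables appearing in $F$, with an edge $\{x,y\}$ whenever $xy$ is a monomial of some $f\in F$. Since each quadratic $f\in F$ is irreducible as a binomial, $f$ has the form $xy-zw$ with $\{x,y\}\cap\{z,w\}=\emptyset$, so $f$ contributes a pair of vertex-disjoint edges to $H$. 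Irreducibility of $F$ (interpreted as the variables not splitting into two disjoint groups each supporting part of $F$) translates into connectivity of $H$.

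The next step is to analyze $H$ using the non-dividing property. The key claim is that $H$ is the complete bipartite graph $K_{2,n-2}$ for some labeling of variables. Bipartiteness is forced because an odd closed walk in $H$ would yield, after telescoping through the binomials of $F$ and repeatedly applying the toric relations, a pair of monomials exhibiting divisibility, contradicting the no-dividing condition. Next, one shows that one of the two parts has size exactly $2$: any two edges meeting at a vertex give monomials sharing a variable, and the non-dividing condition combined with primeness forces the ``opposite'' endpoints to all be joined to a single second vertex. Finally, completeness of the bipartite structure follows because $F$ is a minimal generating set: the absence of any edge from the small part to the large part would cause the corresponding $2\times 2$ minor to be produced as a redundant syzygy from the others, contradicting minimality.

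The main obstacle will be this last structural analysis, namely going from ``$H$ is bipartite with the non-dividing property'' to ``$H$ is exactly $K_{2,n-2}$.'' Showing that the small side has size $2$ rather than $\ge 3$ requires carefully ruling out configurations where three edges share distinct common neighbors but still satisfy the non-dividing condition, and showing completeness requires using both minimality of $F$ and primeness of the toric ideal. Rescaling of variables at the very end accounts for the fact that the coefficients of the minors are normalized to $\pm 1$ once a consistent orientation of the bipartite graph is chosen.
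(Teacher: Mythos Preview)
This theorem is not proved in the present paper at all: it is quoted verbatim from \cite{BR} and used as a black box to obtain the subsequent corollary. There is therefore no proof here to compare your proposal against, and you should not expect to reconstruct one from the surrounding text.

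That said, your sketch has real gaps if taken on its own terms. The bipartiteness step is not justified: an odd cycle in your auxiliary graph $H$ means only that a sequence of monomials $m_0,m_1,\ldots,m_{2k+1}=m_0$ exists with consecutive pairs appearing as terms of elements of $F$, and it is not clear how ``telescoping through the binomials'' produces a single divisibility $m \mid m'$ between terms of two \emph{distinct} elements of $F$. The step from bipartite to ``one side has size exactly $2$'' is even more delicate; the non-dividing condition by itself does not prevent, say, a $6$-cycle in $H$ (three quadratics whose six monomials are all distinct), so some further input---presumably the precise meaning of ``irreducible robust set'' in \cite{BR}, which is stronger than what you have used---is needed. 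Finally, your completeness argument conflates two different things: a missing edge in $H$ does not obviously force a redundant generator, because the corresponding $2\times 2$ minor need not lie in the ideal generated by $F$ at all.

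If you want the actual argument, you will have to consult \cite{BR} directly; the structural analysis there is more involved than what you have outlined.
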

Since we can associate graphs that are of the type in Figure \ref{fig:tents} with the matrices in Theorem \ref{quadchar}, we have the following corollary
\begin{cor}
  All robust toric ideals generated by quadratics are graph ideals and come from the family of graphs given in Figure \ref{fig:tents}.
\end{cor}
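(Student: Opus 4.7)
The plan is to combine the classification in Theorem \ref{quadchar} with a direct identification of the determinantal ideal appearing there as the toric graph ideal of a member of the tent family in Figure \ref{fig:tents}. Given a robust toric ideal $I$ generated by quadratics, I would first reduce to the irreducible case: if the set of generators splits into subsets supported on disjoint sets of variables, then each piece remains robust and quadratic, and a disjoint union of tent graphs still has ideal equal to the sum of the corresponding tent-graph ideals. So it suffices to realize each irreducible factor as a single tent.

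Having reduced to the irreducible case, I would apply Theorem \ref{quadchar} directly. It asserts that, after a rescaling of variables, $I$ either is generated by a single quadratic binomial (which corresponds to the 4-cycle, i.e.\ the $n=2$ member of the family in Figure \ref{fig:tents}) or equals the ideal $J_n = \langle x_iy_j - x_jy_i : 1 \le i < j \le n \rangle$ of $2\times 2$ minors of the generic $2 \times n$ matrix
\[
\begin{pmatrix} x_1 & \cdots & x_n \\ y_1 & \cdots & y_n \end{pmatrix}.
\]

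The heart of the argument is then the identification $J_n = I_{K_{2,n}}$, where $K_{2,n}$ is the tent graph in Figure \ref{fig:tents}. I would label the two apex vertices as $a$, $b$ and the $n$ middle vertices as $c_1,\ldots,c_n$, and identify the edge $\{a,c_i\}$ with the variable $x_i$ and the edge $\{b,c_i\}$ with $y_i$. Under the graph map, $x_i \mapsto ac_i$ and $y_i \mapsto bc_i$, so every minor $x_iy_j - x_jy_i$ vanishes on the image and hence lies in $I_{K_{2,n}}$. Conversely, by Proposition \ref{circuitchar} the circuits of $K_{2,n}$ are exactly its 4-cycles $a - c_i - b - c_j - a$, whose associated binomials are precisely the minors; since these 4-cycles pairwise share either no edge or two edges of opposite parity, the preceding proposition gives robustness, and Theorem \ref{urobustiffgrobust} then says the circuits generate $I_{K_{2,n}}$. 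This yields $J_n = I_{K_{2,n}}$ and completes the corollary.

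The only nontrivial step is this final identification, which is essentially a bookkeeping exercise thanks to the explicit form of $\phi_G$; no real obstacle arises, as the classification work is already done by Theorem \ref{quadchar}.
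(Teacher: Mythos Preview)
Your overall strategy---reduce to the irreducible case, invoke Theorem~\ref{quadchar}, and then identify the $2\times 2$ minors ideal with $I_{K_{2,n}}$---is exactly what the paper does (in a single sentence). The reduction step and the forward inclusion $J_n \subseteq I_{K_{2,n}}$ are fine.

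The gap is in your argument for the reverse inclusion. You write that the $4$-cycles ``pairwise share either no edge or two edges of opposite parity, the preceding proposition gives robustness.'' But Proposition~5.1 goes the other way: it \emph{assumes} $I_G$ is robust and concludes the sharing condition on circuits. It is not stated as an equivalence, so you cannot use it to deduce robustness of $I_{K_{2,n}}$. Moreover, even granting robustness, Theorem~\ref{urobustiffgrobust} only tells you that the \emph{Graver basis} minimally generates; you would still need $\mathcal{C}_G = \mathcal{G}_G$ to conclude that the circuits generate, and you have not argued this.

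The fix is much simpler than the route you attempted: the identification $J_n = I_{K_{2,n}}$ is immediate from the definitions. With your labeling, the graph homomorphism $\phi_{K_{2,n}}$ sends $x_i \mapsto a c_i$ and $y_i \mapsto b c_i$, which is precisely the monomial map $k[x_i,y_i] \to k[a,b,c_1,\ldots,c_n]$ whose kernel is classically known to be $J_n$. So $I_{K_{2,n}} = \ker \phi_{K_{2,n}} = J_n$ with no further work. (Alternatively, since $K_{2,n}$ is bipartite it has no odd cycles, so by Proposition~\ref{ohprimchar} every primitive walk is an even cycle, hence a $4$-cycle; thus $\mathcal{G}_G$ consists exactly of the minors and they generate.)
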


Finally, to show the wide variety of possibilities for robust graphs, we have computed the set of connected robust graphs on seven vertices in Figure 9. To avoid trivialities, we assume no vertex has degree 1. The graphs are partitioned and labeled so that graphs that give isomorphic ideals are in the same partition.


\section{Concluding Remarks}

In light of Theorem \ref{urobustiffgrobust} it is natural to ask whether we can generalize this statement to toric ideals not arrising from graphs. Our proof of Theorem \ref{urobustiffgrobust} relied heavily on graph-theoretic arguments, but perhaps there is a more algebraic proof. Whether or not this theorem generalizes to all toric ideals remains an open question.
\begin{question}
If $I_A$ is a robust toric ideal, is $I_A$ minimally generated by its Graver basis?
\end{question}

\pagebreak

\section{Appendix}

\begin{figure}[ht]\label{graphson7}
\centering
\scalebox{0.44}{\input{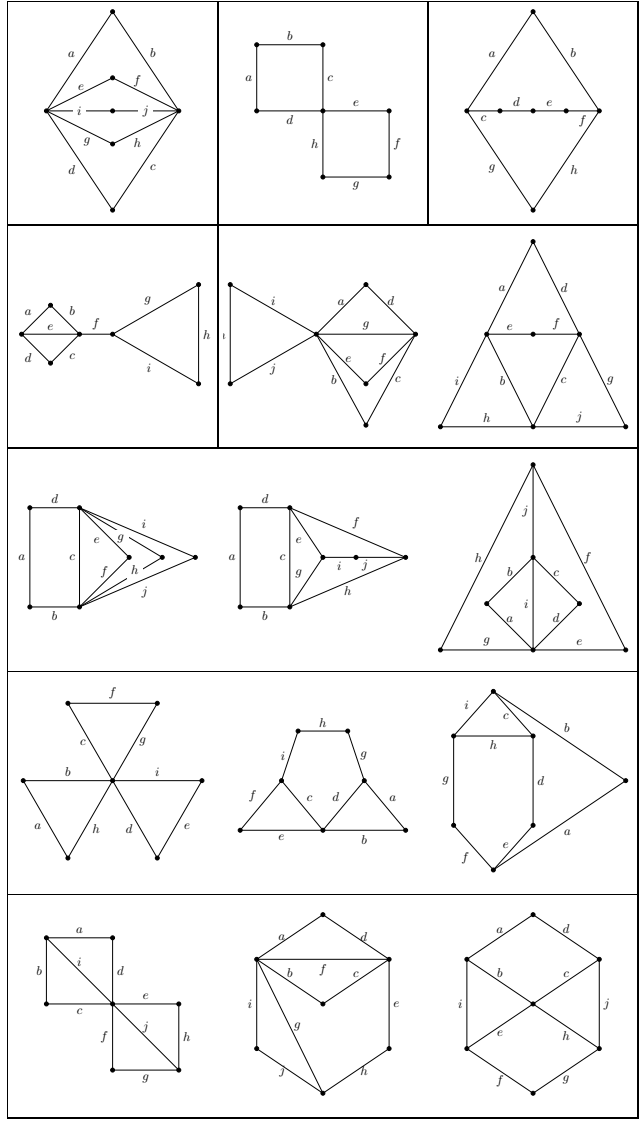}}
\caption{All connected robust graphs $G$ on 7 vertices such that the ideal $I_G$ has full support in its edge ring, divided up into isomorphism classes of $I_G$.}
\end{figure}

\pagebreak 

\section*{Acknowledgements}

This paper is the result of a summer research project conducted at the University of California, Berkeley.  This project was funded by the NSF award number 0838703.

\bibliographystyle{alpha}
\bibliography{graphbib}

\footnotesize {\bf Authors' addresses}:
\\ Department of Mathematics, University of California, Berkeley, CA. 
\hfill {\tt aboocher@math.berkeley.edu}
\\ Department of Mathematics, Pomona College, Claremont, CA. 
\hfill {\tt bcb02011@mymail.pomona.edu}
\\ Department of Mathematics, New College Florida, Sarasota, FL
\hfill {\tt timothy.duff@ncf.edu}
\\ Department of Mathematics, Reed College, Portland, OR
\hfill {\tt lymanla@reed.edu}
\\ Department of Mathematics, Princeton University, Princeton, NJ
\hfill {\tt takumim@princeton.edu}
\\ Department of Mathematics, Boston College, Chestnut Hill, MA
\hfill {\tt nesky@bc.edu}
\\ Department of Mathematics, Rice University, Houston, TX
\hfill {\tt kschaefer10@comcast.net}

\end{document}